\def\mG{\mathsf{G}}	
\def\mV{\mathsf{V}}
\def\mE{\mathsf{E}}
\def\mv{\mathsf{v}}
\def\mw{\mathsf{w}}
\def\me{\mathsf{e}}
\def\uu{\mathfrak{u}}
\def\vv{\mathfrak{v}}
\DeclareMathOperator{\Id}{Id}
\newtheorem{lemma}{Lemma}[section]
\newtheorem{theorem}[lemma]{Theorem}
\newtheorem{rem}[lemma]{Remark}
\newtheorem{proposition}[lemma]{Proposition}
\newtheorem{ex}[lemma]{Example}
\newtheorem{assums}[lemma]{Assumptions}
\theoremstyle{definition}
\theoremstyle{definition}
\numberwithin{lemma}{section}
\newcommand{\bound}{\mathcal{L}}
\newcommand{\A}{\mathcal{A}}
\newcommand{\h}{\widetilde{H}}
\newcommand{\R}{\mathbb{R}}
\newcommand{\C}{\mathbb{C}}
\newcommand{\G}{\mathcal{G}}
\newcommand{\N}{\mathbb{N}}
\newcommand{\f}{\mathfrak{f}}
\newcommand{\g}{\mathfrak{g}}
\numberwithin{equation}{section}
\date{}
\title[higher-order operators on networks: hyperbolic and parabolic theory]{higher-order operators on networks:\\ hyperbolic and parabolic theory}
\author{Federica Gregorio}
\address{Federica Gregorio, Dipartimento di  Matematica, Università degli Studi di Salerno, Via Giovanni Paolo II, 132, 84084 Fisciano (Sa), Italy}
\email{fgregorio@unisa.it}
\author{Delio Mugnolo}
\address{Delio Mugnolo, Lehrgebiet Analysis, Fakult\"at Mathematik und Informatik, Fern\-Universit\"at in Hagen, D-58084 Hagen, Germany}
\email{delio.mugnolo@fernuni-hagen.de}
\thanks{
The first author is member of the Gruppo Nazionale per l'Analisi Matematica,
la Probabilit\`a e le loro Applicazioni (GNAMPA) of the Istituto Nazionale di Alta Matematica
(INdAM). The second author has been partially supported by the Deutsche Forschungsgemeinschaft (Grant 397230547). Both authors would like to acknowledge networking support by the COST Action CA18232.}
\subjclass[2010]{35K35, 35L35, 34B45}
\keywords{Linear operator semigroups; Polyharmonic operators; Dynamic boundary conditions; Extension theory; Quadratic forms; Eventually positive semigroups}
\begin{document}

\maketitle

\begin{center}
\textit{Dedicated to Professor Silvia Romanelli on the occasion of her $70th$ birthday}
\end{center}

\begin{abstract}
We study higher-order elliptic operators on one-dimensional ramified structures (networks). We  introduce a general variational framework for fourth-order operators that allows us to study features of both hyperbolic and parabolic equations driven by this class of operators. We observe that they extend to the higher-order case and discuss well-posedness and conservation of energy of beam equations, along with regularizing properties of polyharmonic heat kernels. A noteworthy finding is the discovery of a new class of well-posed evolution equations with Wentzell-type boundary conditions. 
\end{abstract}

\section{Introduction}

Double-beam systems are a classical subject of theoretical mechanics, see e.g.~\cite{CheShe95,VuOrdKar00}: they consist of two beams mediated by a viscoelastic material layer. On the mathematical level, this is modeled by strong couplings between the equations, usually complemented by identical, homogeneous boundary conditions -- say, clamped or hinged. In the last two decades, coupled systems consisting of \textit{networks} of (almost) one-dimensional  beams have aroused more and more interest: unlike in double-beam systems, all interactions take place in the ramification points.

Our aim in this note is twofold: we first discuss some properties of beam equations
\begin{equation*}\label{eq:beam}
\frac{\partial^2 u}{\partial t^2}=-\frac{\partial^4u}{\partial x^4}
\end{equation*}
 on networks of one-dimensional  elements, with a focus on the solution properties that depend on rather general transmission conditions in the nodes.  To this purpose, we propose a variational treatment of beam equations  developing a formalism that   happens then to be easily extendible to the study of parabolic equations driven by elliptic operators of arbitrary even order, again with general combinations of stationary and dynamic boundary conditions.

The analysis of evolution equations on networks has become a very popular topic since Lumer introduced in~\cite{Lum80} a theoretical framework to study heat equations on ramified structures; but in fact, time-dependent Schrödinger equations on networks  have been studied by quantum chemists since the 1940s and perhaps earlier, see the references in~\cite[\S~2.5]{Mug14}. 
Also, networks of thin linear beams have been studied often in the literature, with a special focus on controllability and stabilization issues, ever since the pioneering discussion in~\cite{LagLeuSch92,LagLeuSch93}. 
The simplest case of a network corresponds to a path graph -- a concatenation of linear elements. This case models a beam consisting of different segments with various elasticity properties:   different vertex conditions for the bi-Laplacian that appears in the beam equation on a single path graph  have  been derived from physical principles in~\cite{CheDelKra87}. 
It turns out that there is no unique natural choice for transmission conditions in a network's node: based on physical considerations, several conditions have been proposed in the literature, especially of stationary nature~\cite{DekNic98,DekNic99,DekNic00,BorLaz04,DagZua06,KiiKurUsm15}. In~\cite{GreMug20} we have applied the classical extension theory of symmetric positive semidefinite operators to discuss general transmission conditions for the bi-Laplacian; in particular, we have described an infinite class of transmission conditions leading to well-posedness of the beam equation on networks. 

Along with stationary conditions, conditions of dynamic type have been very popular in the context of networks of beams, as they naturally model  
 massive junctions: we refer to \cite{MerReg08,MerReg08b,MerReg09,TorAfsNaj13}. 
 In Section~\ref{beameq} we are going to continue and extend the analysis of fourth-order operators on finite networks initiated in~\cite{GreMug20}: we characterize dynamic conditions leading to self-adjoint realizations and in fact, parametrize an infinite class of transmission conditions under which the corresponding system of beams is well-posed and enjoys conservation of energy. Rather general dynamic boundary conditions leading to self-adjoint, dissipative realizations on a single interval have been studied in~\cite{FavGolGol07}: they are special cases of our parametrization, too, which elaborates on an idea of Arendt and his co-authors~\cite{AreMetPal03,AreEls12} for the discussion of second-order elliptic operators with dynamic boundary conditions through quadratic forms on product Hilbert spaces. An interesting feature of our theoretical framework is that, by appropriately choosing the product space, we can simultaneously treat evolution equations endowed by dynamic and/or stationary conditions. We mention that similar ideas and a comparable formalism have been successfully applied in~\cite{KraMugNic20} to study hyperbolic systems with dynamic boundary conditions.

In Section~\ref{sec:generalpoly} we also  extend most of the methods developed for the beam equations to the study of parabolic features of equations driven by $j$th powers of the Laplacian, again on networks. Linear and semilinear elliptic equations associated with such operators have been discussed often in the literature since a classical article by Davies~\cite{Dav95}; we also refer  to \cite{BarGaz13} for a collection of the main features of such equations, to \cite{LANGER199973} for a deep study of contractivity properties of the generated semigroups,  to~\cite{GazGruSwe10} for a selection of related models in physics and mechanics, and to~\cite{FavGolGol08,FavGolGol10,DenKunPlo20} for a study of some realizations with dynamic boundary conditions on domains. After briefly showing well-posedness of general hyperbolic equations (second derivative in time, arbitrary integer powers of the Laplacians in space) with dynamic and/or stationary equations we turn to the properties of the analytic semigroup generated by the same differential operator's realizations on a finite network. In  particular, we show that such a semigroup is of trace class and (under mild assumptions) ultracontractive; we also show that in spite of failure of the maximum principle for any $j\ge 2$, depending on the transmission conditions such a semigroup may or may not be eventually sub-Markovian and eventually enjoy a strong Feller property.

Based on a classical idea that goes back to~\cite{FavGolGol02}, it is well-known that in the case of second-order elliptic operators there is a direct connection between dynamic and Wentzell-type  boundary conditions: formally taking the boundary trace of the evolution equation and plugging it into the dynamic boundary condition, one thus obtains a boundary condition involving boundary terms of order as high as the operator itself. 
We are also going to demonstrate that the class of Wentzell-type boundary conditions hitherto considered in the literature is unnecessarily restrictive; possibly the most surprising finding of our paper is that the natural Wentzell-type boundary conditions for an evolution equation of order $2j$ in space are in fact of \textit{higher} order than the operator itself: namely, of order $3j-1$, see Proposition~\ref{prop:surpr}.(iv).

\section{The beam equation on networks}\label{beameq}

We consider a finite connected graph $\mG=(\mV,\mE)$, with $V:=|\mV|$ vertices and $E:=|\mE|$ edges; loops and parallel edges are allowed. 
We also denote by $\mE_{\mv}$ the set of all edges incident in $\mv$. We fix an arbitrary orientation of $\mG$, so that each edge $\me\equiv(\mv,\mw)$ can be identified with an interval $[0,\ell_\me]$ and its endpoints $\mv,\mw$ with $0$ and $\ell_\me$, respectively. In such a way one naturally turns the $\mG$ into a metric measure space $\mathcal G$: a \textit{network} (or \textit{metric graph}) whose underlying discrete graph is precisely $\mG$. We refer to~\cite[Chapt.~3]{Mug14} for details.

Functions on $\mathcal G$ are vectors $(u_\me)_{\me\in\mE}$, where each $u_\me$ is defined on the edge $\me\simeq(0,\ell_\me)$. We introduce the Hilbert space of measurable, square integrable functions on $\mathcal G$
\[
\begin{split}
L^2(\mathcal G)&:=\bigoplus_{\me\in \mE} L^2(0,\ell_\me)\\
\end{split}
\]
endowed with the natural inner product 
\[
(u,v)_{L^2(\mathcal G)}:=\int_\G u(x)\overline{v(x)}\,dx= \sum\limits_{\me\in\mE}\int_0^{\ell_\me} u_\me(x)\overline{v_\me(x)}\,dx.
\]
Boundary values of elements of $L^2(\mathcal G)$ are not defined, and in this sense functions that are merely in $L^2(\mathcal G)$ cannot mirror the topology of the network $\mathcal G$: in order to describe transition conditions in the vertices we  to introduce the Sobolev spaces
\[\h^k(\mathcal G):=\bigoplus_{\me\in\mE} H^k(0,\ell_\me),\qquad k\in \mathbb N:\]
they consist of $L^2(\mathcal G)$-functions  whose $k$-th weak derivatives are elements of $L^2(\mathcal G)$, too.

Consider the operator $A$ defined  edgewise as the fourth derivative
\[
A:(u_\me)_{\me\in\mE}\mapsto (u''''_\me)_{\me\in\mE}
\]
(here and in the following, $'=\frac{d }{d x}$)  with domain 
\[
\bigoplus_{\me\in\mE}C_c^\infty(0,\ell_\me):
\]
it is symmetric and strictly positive, hence its self-adjoint extensions can be described by means of the extension theory due to Friedrichs and Krein. An important role is played by the closable quadratic form associated with $A$, which is given by
\begin{equation*}\label{formL2}
a(u,v)=\sum\limits_{\me\in\mE}\int_0^{\ell_\me}u_\me''(x)\overline{v_\me''(x)}\,dx,\qquad u,v\in \bigoplus_{\me\in\mE}C_c^\infty(0,\ell_\me).
\end{equation*}
However, a sesquilinear form can -- and typically will -- have	 different associated operators whenever it is studied on different Hilbert spaces. In~\cite[\S~3]{GreMug20} we have characterized the self-adjoint extensions of $A$ on $L^2(\mathcal G)$ and discussed further realizations that generate cosine operator functions and operator semigroups, again on $L^2(\mathcal G)$. In this paper we are going to discuss the more general case of extensions on Hilbert spaces of the form
\[
L^2(\mathcal G)\oplus Y_d
\]
where $Y_d$ is any subspace of the ``boundary space'' $\C^{4 E}$. Therefore, let us consider the space $L^2(\G)\oplus Y_d$ whose elements are of the form $\uu=\begin{pmatrix}u\\\theta\end{pmatrix}$. This is  a Hilbert space with respect to the canonical inner product
\[
\left(\mathfrak u,\mathfrak v\right)=\left(\begin{pmatrix}u\\ \theta\end{pmatrix},\begin{pmatrix}v\\ \phi\end{pmatrix}\right):=(u,v)_{L^2(\mathcal G)}+\left( \theta,\phi\right)_{Y_d},
\]
where $(\cdot,\cdot)_{Y_d}$ is the restriction to $Y_d$ of the canonical inner product of $\C^{4E}$.

In the following, we denote by $P_{Z}$ and $P^\perp_Z$ the orthogonal projector of $\C^{4E}$ onto subspaces $Z$ and $Z^\perp$, respectively;  we also introduce the notation
\[
\Gamma_\circ u:
=\begin{pmatrix}
\left(u_\me(0)\right)_{\me\in\mE}\\\left(u_\me(\ell_\me)\right)_{\me\in\mE}\\-\left(u'_\me(0)\right)_{\me\in\mE}\\\left(u'_\me(\ell_\me)\right)_{\me\in\mE}
\end{pmatrix}\quad\hbox{and}\quad
\Gamma^\circ u:=
\begin{pmatrix}
-\left(u'''_\me(0)\right)_{\me\in\mE}\\\left(u'''_\me(\ell_\me)\right)_{\me\in\mE}\\-\left(u''_\me(0)\right)_{\me\in\mE}\\-\left(u''_\me(\ell_\me)\right)_{\me\in\mE}
\end{pmatrix}.
\]

Consider the quadratic form 
\begin{equation}\label{formcont}
\begin{split}
a(\uu,\vv)&=\sum_{\me\in\mE}\int_0^{\ell_\me}u_\me''(x)\overline{v_\me''(x)}\,dx,\\
D(a)&=\left\{ \uu=\begin{pmatrix}u\\\theta\end{pmatrix}\in \h^2(\G)\oplus Y_d: \Gamma_\circ u=\theta\right\}.
\end{split}
\end{equation}
It is closable and its closure is associated with a self-adjoint operator: it is easily seen
  that this is the operator matrix
\begin{equation}\label{eq:aopmat}
\A=\begin{pmatrix}
\frac{d^4}{dx^4}&0\\-\Gamma^\circ&0
\end{pmatrix}
\end{equation}
with domain 
\[D(\A)=\left\{\begin{pmatrix}u\\\theta\end{pmatrix}\in \h^4(\G)\oplus Y_d:\Gamma_\circ u=\theta\right\}.\]
 Consider now $\A_{\max}$ and $\A_0$, the maximal and the minimal realizations of the  operator $\A$, respectively, endowed with the domains
\begin{align*}
D(\A_{\max})&=\h^4(\G)\oplus Y_d;\\
D(\A_0)&=\left\{\begin{pmatrix}
u\\ \theta
\end{pmatrix}\in \h^4(\G)\oplus Y_d: P_{Y_d}\Gamma_\circ u=\theta,\  P^\perp_{Y_d}\Gamma_\circ u=0,\hbox{ and } \Gamma^\circ u=0 \right\}.
\end{align*}
The main result in this section is a characterization of all further self-adjoint extensions of $\mathcal A_0$ on 
\[
L^2(\mathcal G)\oplus Y_d
\]
in the spirit of~\cite[Thm.~1.4.4]{BerKuc13}, see also~\cite[Thm.~3.1]{GreMug20} for the extension theory of bi-Laplacians on $L^2(\mathcal G)$.

Our starting point is the sesquilinear form \eqref{formcont}, which can be further generalized by adding a boundary term of the form $\left(R  \Gamma_\circ u,\Gamma_\circ v \right)$ for some $R\in{\mathcal L}(\C^{4E})$. Now, take any subspace $Y_s$ of $\C^{4E}$ that is orthogonal to $Y_d$
 and impose the boundary conditions
\begin{equation*}\label{eq:bc1}
\Gamma_\circ u,\Gamma_\circ v\in Y:=Y_d\oplus Y_s.
\end{equation*}
This motivates us to introduce the Hilbert space 
\[
\mathcal V:=\left\{\begin{pmatrix}u\\\theta\end{pmatrix}\in \widetilde{H}^2(\G)\oplus Y_d:\Gamma_\circ u\in Y\hbox{ and } P_{Y_d}\Gamma_\circ u=\theta\right\}.
\]

Hence, consider the sesquilinear form $\tilde{a}$ defined by
\begin{equation*}\label{form}
\begin{split}
\tilde{a}(\uu,\vv):&=\int_{\G} u'' \overline{v''}\, dx-\left(R   \Gamma_\circ u,\Gamma_\circ v \right)_{Y}\\
&=\int_{\G} u'' \overline{v''}\, dx-\left(P_{Y_s}R P_{Y_s}  \Gamma_\circ u,P_{Y_s}\Gamma_\circ v \right)_{Y_s}-\left(P_{Y_d}R P_{Y_d}  \Gamma_\circ u,P_{Y_d}\Gamma_\circ v \right)_{Y_d}
\end{split}\qquad \uu,\vv\in \mathcal V.
\end{equation*}
If additionally $u\in \h^4(\mathcal G)$, integrating by parts we find
\[
\int_\G u'' \overline{v''}\, dx=\int_\G  u'''' \overline{ v}\, dx -\left(\Gamma^\circ u,\Gamma_\circ v\right)_Y.
\]
We deduce for all $\uu,\vv\in  \mathcal V$ such that $u\in \h^4(\G)$
\begin{equation*}\label{eq:calc-fed-0}
\begin{split}
\tilde{a}(\uu,\vv)&=
\int_\G  u'''' \overline{ v}\, dx -\left(\Gamma^\circ u,\Gamma_\circ v\right)_Y -\left(R \Gamma_\circ u,\Gamma_\circ v \right)_Y
\end{split}
\end{equation*}
and hence
\begin{equation}\label{eq:calc-fed}
\tilde{a}(\uu,\vv)=
\int_\G  u'''' \overline{ v}\, dx -\left(P_{Y_d}(\Gamma^\circ u+RP_{Y_d}\Gamma_\circ u),P_{Y_d}\Gamma_\circ v\right)_{Y_d}-\left(P_{Y_s}(\Gamma^\circ u+RP_{Y_s}\Gamma_\circ u),P_{Y_s}\Gamma_\circ v\right)_{Y_s}.
\end{equation}
If we additionally impose $(\Gamma^\circ u+RP_{Y_s}\Gamma_\circ u)\perp Y_s$, i.e.,
\begin{equation}\label{eq:bc2}
P_{Y_s}\left(\Gamma^\circ u+RP_{Y_s}\Gamma_\circ u\right)=0,
\end{equation}
we can hence compactly write 
\begin{equation*}\label{eq:compact}
\tilde{a}(\uu,\vv)=\left(\begin{pmatrix}
\frac{d^4}{dx^4} & 0\\
-P_{Y_d} \Gamma^\circ  & -P_{Y_d}R
\end{pmatrix}
\begin{pmatrix}
u\\ P_{Y_d}\Gamma_\circ u
\end{pmatrix} , \begin{pmatrix}
v\\ P_{Y_d}\Gamma_\circ v
\end{pmatrix}\right)_{L^2(\mathcal G)\oplus Y_d},
\end{equation*}
 for all $\uu\in (\widetilde{H}^4(\G)\oplus Y_d)\cap \mathcal V$ satisfying \eqref{eq:bc2} and all $\vv \in \mathcal V$. Summing up, we describe the transmission conditions in the network's vertices by means of a subspace $Y$ of $\C^{4E}$: this consists of two orthogonal subspaces $Y_d$ and $Y_s$ that encode the dynamic and stationary part of the transmission conditions, respectively.

Observe that the quadratic form $\tilde{a}$ in \eqref{eq:calc-fed} is symmetric if and only if both $D:=P_{Y_d}RP_{Y_d}$ and $S:=P_{Y_s}RP_{Y_s}$ are self-adjoint. 

 Let us first focus on the case $D=0$.

\begin{theorem}\label{thm:extension}
 Let $Y_d$ be a subspace of $\C^{4E}$. Then,  for any extension $\mathcal A$ of $\mathcal A_0$ on $L^2(\mathcal G)\oplus Y_d$, the following are equivalent.
\begin{enumerate}[(i)]
\item $\mathcal A$ is self-adjoint.
\item The operator $\A$ has the form
\[
\begin{split}
\mathcal A&=\begin{pmatrix}\frac{d^4 }{dx^4} & 0 \\ -P_{Y_d}\Gamma^\circ  &0 \end{pmatrix},\\
D(\mathcal A)&=\bigg\{ \begin{pmatrix}u\\ {\bf \theta}\end{pmatrix}\in\h^4(\G)\oplus Y_d: \Gamma_\circ u\in Y,\ P_{Y_d}\Gamma_\circ u=\theta, \hbox{ and } P_{Y_s}(\Gamma^\circ u+SP_{Y_s}\Gamma_\circ u)=0\bigg\},
\end{split}
\]
 for some subspace $Y_s$ of $\C^{4E}$ orthogonal to $Y_d$ with $Y:=Y_d\oplus Y_s$ and some self-adjoint linear operator $S\in\mathcal{L}(Y_s)$. 

\end{enumerate}
\end{theorem}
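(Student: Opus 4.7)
The plan is to prove the two implications separately, relying on the quadratic-form formalism and the Green identity that the preceding computations already make available.

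\textbf{Direction $(ii)\Rightarrow(i)$.} I would take the sesquilinear form $\tilde a$ from above with $D=0$ and $R:=P_{Y_s}SP_{Y_s}$ for the given self-adjoint $S\in\mathcal L(Y_s)$. Symmetry of $\tilde a$ follows from self-adjointness of $S$, and $\tilde a$ is closed on $\mathcal V$ because $\mathcal V$ is a closed subspace of $\h^2(\G)\oplus Y_d$ in its natural norm; density of $\mathcal V$ in $L^2(\G)\oplus Y_d$ follows from $\bigoplus_{\me\in\mE}C_c^\infty(0,\ell_\me)\oplus Y_d\subset\mathcal V$. Lower semiboundedness comes from Ehrling's lemma, which absorbs the boundary term $(SP_{Y_s}\Gamma_\circ u,P_{Y_s}\Gamma_\circ u)$ into $\varepsilon\|u''\|_{L^2(\G)}^2+C_\varepsilon\|u\|_{L^2(\G)}^2$. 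The Kato representation theorem then associates a unique self-adjoint operator with $\tilde a$, and formula~\eqref{eq:calc-fed} together with~\eqref{eq:bc2} identifies its action and domain as those in~(ii).

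\textbf{Direction $(i)\Rightarrow(ii)$.} Let $\mathcal A$ be a self-adjoint extension of $\mathcal A_0$. Computing $\mathcal A_0^*$, as in~\cite[Thm.~3.1]{GreMug20}, shows that $D(\mathcal A)\subset\h^4(\G)\oplus Y_d$ and every $\uu\in D(\mathcal A)$ satisfies $P_{Y_d}\Gamma_\circ u=\theta$, so the action of $\mathcal A$ is automatically that in~(ii). A double integration by parts on each edge, combined with the coupling $P_{Y_d}\Gamma_\circ u=\theta$ and the bottom-left block $-P_{Y_d}\Gamma^\circ$, yields the cancellation of all $Y_d$-contributions and leaves the Green identity
\[
(\mathcal A\uu,\vv)-(\uu,\mathcal A\vv)=(P_{Y_d}^\perp\Gamma^\circ u,P_{Y_d}^\perp\Gamma_\circ v)_{Y_d^\perp}-(P_{Y_d}^\perp\Gamma_\circ u,P_{Y_d}^\perp\Gamma^\circ v)_{Y_d^\perp}.
\]
Self-adjointness of $\mathcal A$ therefore says that the subspace
\[
L:=\{(P_{Y_d}^\perp\Gamma_\circ u,P_{Y_d}^\perp\Gamma^\circ u):\uu\in D(\mathcal A)\}\subset Y_d^\perp\oplus Y_d^\perp
\]
is maximally isotropic, i.e., Lagrangian, with respect to the canonical complex symplectic form.

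A finite-dimensional argument in symplectic linear algebra, entirely analogous to~\cite[Thm.~1.4.4]{BerKuc13} and~\cite[Thm.~3.1]{GreMug20}, then classifies each Lagrangian subspace of $V\oplus V$ in the unique form $\{(x,y):x\in W,\,P_Wy=-Tx\}$ for a subspace $W\subset V$ and a self-adjoint $T\in\mathcal L(W)$. Applied with $V=Y_d^\perp$ this produces the desired pair $(Y_s,S)$, and translating back to boundary data shows that $\uu\in D(\mathcal A)$ is equivalent to $\Gamma_\circ u\in Y_d\oplus Y_s$ together with $P_{Y_s}(\Gamma^\circ u+SP_{Y_s}\Gamma_\circ u)=0$. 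The main technical obstacle is precisely the bookkeeping needed to verify the cancellation of $Y_d$-contributions in the Green identity: it is this cancellation that isolates a symplectic form purely on $Y_d^\perp\oplus Y_d^\perp$ and thereby allows the Lagrangian classification to apply verbatim, and it is also the place where the hypothesis $D=0$ becomes essential, since an operator $D\ne 0$ would survive in the $Y_d$-block and force a more elaborate parametrization of the extensions.
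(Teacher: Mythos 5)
Your argument is correct in its overall architecture, but it follows a genuinely different route from the paper's in both directions. For (ii)$\Rightarrow$(i) the paper works directly at the operator level: it checks that for $\uu,\vv\in D(\mathcal A)$ the boundary terms produced by two integrations by parts cancel (symmetry), and that conversely the vanishing of those boundary terms against all $\uu\in D(\mathcal A)$ forces $\vv\in D(\mathcal A)$ (maximality); no form is invoked at this stage. You instead realize $\mathcal A$ as the operator associated with the closed, symmetric, semibounded form $\tilde a$ and quote the representation theorem. This buys closedness and semiboundedness for free (facts the paper only extracts later, in Lemma~\ref{lem:forma}, for the generation results), but it costs an extra step the paper's route avoids: the representation theorem together with~\eqref{eq:calc-fed} and~\eqref{eq:bc2} yields only the inclusion of the operator in (ii) \emph{into} the form operator, and to get equality you still need the standard elliptic-regularity argument (test against $\vv$ with $v\in\bigoplus_\me C_c^\infty(0,\ell_\me)$ to get $u\in\h^4(\G)$, then integrate by parts against arbitrary traces to recover the boundary conditions); your one-line identification glosses over this. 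For (i)$\Rightarrow$(ii) the paper again computes boundary terms and reads off the conditions, but it is silent about how the pair $(Y_s,S)$ is actually \emph{produced} from $D(\mathcal A)$ -- the condition $P_{Y_s}(\Gamma^\circ u+SP_{Y_s}\Gamma_\circ u)=0$ is used as if already available. Your reduction to the classification of Lagrangian subspaces of $Y_d^\perp\oplus Y_d^\perp$ supplies exactly this missing construction and is closer in spirit to~\cite[Thm.~1.4.4]{BerKuc13}, which the paper itself names as its model; your verification that the $Y_d$-contributions cancel once the coupling $\phi=P_{Y_d}\Gamma_\circ v$ is enforced is correct and agrees with the paper's computation.

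Two small repairs are needed. First, $\bigoplus_{\me\in\mE}C_c^\infty(0,\ell_\me)\oplus Y_d\not\subset\mathcal V$: if $u$ is compactly supported on each edge then $\Gamma_\circ u=0$, so the coupling $P_{Y_d}\Gamma_\circ u=\theta$ forces $\theta=0$. Density of $\mathcal V$ in $L^2(\mathcal G)\oplus Y_d$ is still true -- combine such $u$ with correctors $w_\varepsilon\in\h^2(\G)$ of small $L^2$-norm and prescribed trace $\Gamma_\circ w_\varepsilon=\theta\in Y_d\subset Y$ -- but it needs this extra line (the paper handles it via the dense range of the map $j$ in Lemma~\ref{lem:forma}). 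Second, both you and the paper deduce the coupling $\phi=P_{Y_d}\Gamma_\circ v$ on the adjoint domain from the identity $(P_{Y_d}\Gamma^\circ u,\phi-P_{Y_d}\Gamma_\circ v)_{Y_d}=0$ for all $\uu\in D(\mathcal A)$; this tacitly uses that $P_{Y_d}\Gamma^\circ u$ sweeps out all of $Y_d$ as $\uu$ ranges over $D(\mathcal A)$, a surjectivity statement worth recording explicitly since it does not follow from $D(\mathcal A_0)$ alone (there $\Gamma^\circ u=0$). This last point is a gap you share with the paper rather than one you introduced.
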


In the case of $Y_d=\{0\}$, this has been proved in~\cite{GreMug20}. Theorem~\ref{thm:extension} sharpens the main result in~\cite{FavGolGol07} already in the case of an interval (i.e., a graph consisting of a unique edge).

\begin{proof}
(i)$\Rightarrow$(ii) 
Because $\mathcal A$ is an extension of the minimal realization $\mathcal A_0$, hence a restriction of the maximal realization $\mathcal A_{\max}$, $\mathcal A$ has the same form of the operator matrix in~\eqref{eq:aopmat}. 

Let $\uu\in D(\mathcal A)$ and $\vv=\begin{pmatrix}v\\\phi\end{pmatrix}\in D(\A_{\max})$:
self-adjointness of $\mathcal A$ amounts to the condition that
\begin{align*}
(\mathcal A\uu,\vv)_{L^2(\G)\oplus Y_d}&=\int_\G  u'''' \overline{ v}\, dx -(P_{Y_d}\Gamma^\circ u,\phi)_{Y_d}
\\
&=\int_\G u\overline{v''''}\,dx -(P_{Y_d}\Gamma^\circ u,\phi)_{Y_d}+(\Gamma^\circ u,\Gamma_\circ v)-(\Gamma_\circ u,\Gamma^\circ v)
\\
&=\int_\G u\overline{v''''}\,dx -(P_{Y_d}\Gamma^\circ u,\phi)_{Y_d}+\left(P_{Y_d}\Gamma^\circ u,P_{Y_d}\Gamma_\circ v\right)_{Y_d}\\&\qquad-\left(P_{Y_d}\Gamma_\circ u,P_{Y_d}\Gamma^\circ v\right)_{Y_d}+\left(P_{Y_s}\Gamma^\circ u,P_{Y_s}\Gamma_\circ v\right)_{Y_s}-\left(P_{Y_s}\Gamma_\circ u,P_{Y_s}\Gamma^\circ v\right)_{Y_s}\\&\qquad+(P_{Y^\perp}\Gamma^\circ u,P_{Y^\perp}\Gamma_\circ v)_{Y^\perp}-(P_{Y^\perp}\Gamma_\circ u,P_{Y^\perp}\Gamma^\circ v)_{Y^\perp}
\end{align*}
agrees with
\begin{align*}
(\uu,\A\vv)_{L^2(\G)\oplus Y_d}&=\int_\G  u \overline{ v''''}\, dx -(P_{Y_d}\Gamma_\circ u,P_{Y_d}\Gamma^\circ v)_{Y_d}.
\end{align*}
Therefore, the boundary terms should vanish. Considering that $\Gamma_\circ u\in Y$ one has
\[
\left\{
\begin{split}
&(P_{Y_d}\Gamma^\circ u,\phi)_{Y_d}-\left(P_{Y_d}\Gamma^\circ u,P_{Y_d}\Gamma_\circ v\right)_{Y_d}
=0,\\
&(P_{Y^\perp}\Gamma^\circ u,P_{Y^\perp}\Gamma_\circ v)_{Y^\perp}=0 
 ,\\
&\left(P_{Y_s}\Gamma^\circ u,P_{Y_s}\Gamma_\circ v\right)_{Y_s}-\left(P_{Y_s}\Gamma_\circ u,P_{Y_s}\Gamma^\circ v\right)_{Y_s}=0.
\end{split}
\right.
\]
The first equality $(P_{Y_d}\Gamma^\circ u,\phi)_{Y_d}=(P_{Y_d}\Gamma^\circ u,P_{Y_d}\Gamma_\circ v)_{Y_d}$ shows that 
 $\phi=P_{Y_d}\Gamma_\circ v$. The remaining conditions are
 \begin{equation}\label{self-adj}
\left\{
\begin{split}
&(P_{Y^\perp}\Gamma^\circ u,P_{Y^\perp}\Gamma_\circ v)_{Y^\perp}=0 
 ,\\
&\left(P_{Y_s}\Gamma^\circ u,P_{Y_s}\Gamma_\circ v\right)_{Y_s}-\left(P_{Y_s}\Gamma_\circ u,P_{Y_s}\Gamma^\circ v\right)_{Y_s}=0,
 \end{split}
 \right.
 \end{equation}
from which it follows that  $S$ is self-adjoint and $\vv\in D(\A)$. Indeed, from the first condition of \eqref{self-adj} one straightforwardly obtains  that $\Gamma_\circ v\in Y$. From the last one, using the fact that  $P_{Y_s}(\Gamma^\circ u+SP_{Y_s}\Gamma_\circ u)=0$ one obtains      $\left(SP_{Y_s}\Gamma_\circ u,P_{Y_s}\Gamma_\circ v\right)_{Y_s}-\left(P_{Y_s}\Gamma_\circ u,P_{Y_s}\Gamma^\circ v\right)_{Y_s}=0$ and hence $S$ needs to be self-adjoint and $P_{Y_s}(\Gamma^\circ v+SP_{Y_s}\Gamma_\circ v)=0$.

(ii)$\Rightarrow$(i) 
In order to prove self-adjointness of $\A$ we have to establish two facts: (a) if $\uu$ and $\vv$ belong  to $D(\A)$, then \eqref{self-adj} holds and (b) if $\uu\in D(\A)$ and \eqref{self-adj} holds, then $\vv\in D(\A)$.
 If $\uu,\vv$ belong to $D(\A)$, then the set of equalities \eqref{self-adj} holds and this take cares of (a). If instead $\uu\in D(\A)$ and $\eqref{self-adj}$ holds with $S$ self-adjoint one directly has, as shown before, that $\vv\in D(\A)$ that is (b). 
\end{proof}

This motivates us to impose the following throughout this  section.

\begin{assums}\label{ass:main1}
$Y$ is a subspace of $\C^{4E}$, $Y_d,Y_s$ are  orthogonal subspaces of $Y$ such that $Y=Y_d\oplus Y_s$, and $S$ is  a linear operator on $Y_s$. 
\end{assums}

Let us recall a celebrated result due to J.\ Kisy\'nski: given a closed, densely defined operator $A$ on a Banach space $X$, generation of a cosine operator function by $A$ is equivalent to the existence of a space $V$ such that $D(A)\hookrightarrow V\hookrightarrow X$ and that the part of the operator matrix
\[
\begin{pmatrix}
0 & I\\ A & 0
\end{pmatrix}
\]
in $V\oplus X$ generates a strongly continuous semigroup, see~\cite[Thm.~3.14.11]{AreBatHie10}. In this case, $V$ is unique and is often called \textit{Kisy\'nski space} in the literature.

\begin{lemma}\label{lem:forma}
Under the Assumptions~\ref{ass:main1}, the operator $-\mathcal A$  associated with the form
\[a(\mathfrak u,\mathfrak v):=\int_\G  u'' \overline{ v''}\, dx -\left(SP_{Y_s}   \Gamma_\circ u,P_{Y_s}\Gamma_\circ v \right)_{Y_s}
\]
with domain
\[
\mathcal V:=\left\{\begin{pmatrix}u\\\theta\end{pmatrix}\in \widetilde{H}^2(\G)\oplus Y_d:\Gamma_\circ u\in Y,\ P_{Y_d}\Gamma_\circ u=\theta\right\}
\]
 generates on $L^2(\mathcal G)\oplus Y_d$ a cosine operator function 
with Kisy\'nski space $\mathcal V$. 
\end{lemma}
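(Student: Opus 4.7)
The plan is to combine Kato's first representation theorem with the classical fact that every self-adjoint, bounded-below operator on a Hilbert space generates a cosine operator function whose Kisy\'nski space is precisely the form domain (see \cite{AreBatHie10}). The task thus reduces to verifying that $a$ is symmetric, densely defined, continuous, closed, and bounded from below on $\mathcal V$; Theorem~\ref{thm:extension} will then identify the resulting self-adjoint operator with $\mathcal A$.

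First I would check that $\mathcal V$ is a closed subspace of $\widetilde{H}^2(\mathcal G)\oplus Y_d$: both defining conditions $\Gamma_\circ u\in Y$ and $P_{Y_d}\Gamma_\circ u=\theta$ are kernel equations for bounded linear maps, because the trace $\Gamma_\circ\colon \widetilde{H}^2(\mathcal G)\to \C^{4E}$ is continuous. Equipped with the inherited inner product, $\mathcal V$ is therefore a Hilbert space embedding continuously and densely into $L^2(\mathcal G)\oplus Y_d$, density being clear since $\bigoplus_{\me\in\mE} C^\infty_c(0,\ell_\me)\oplus\{0\}\subset\mathcal V$. Symmetry of $a$ is immediate once we record that in the setting of Theorem~\ref{thm:extension} the boundary operator $S$ must be taken self-adjoint; continuity on $\mathcal V$ is then a direct application of Cauchy--Schwarz together with the boundedness of $\Gamma_\circ$ on $\widetilde{H}^2(\mathcal G)$.

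The crux is semi-boundedness. Since $Y_s$ is finite-dimensional, one has $|(SP_{Y_s}\Gamma_\circ u,P_{Y_s}\Gamma_\circ u)_{Y_s}|\le \|S\|\,|\Gamma_\circ u|^2$, and a standard one-dimensional Gagliardo--Nirenberg / trace interpolation inequality applied on each edge yields
\[
|\Gamma_\circ u|^2\le \varepsilon\,\|u''\|_{L^2(\mathcal G)}^2+C_\varepsilon\,\|u\|_{L^2(\mathcal G)}^2,\qquad \varepsilon>0.
\]
Choosing $\varepsilon<(2\|S\|+1)^{-1}$ gives $a(\uu,\uu)\ge \tfrac12\|u''\|_{L^2}^2-M\|\uu\|^2$ for some $M\ge 0$. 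This delivers both semi-boundedness and the equivalence of the form norm of $a$ with the norm of $\widetilde H^2(\mathcal G)\oplus Y_d$ on $\mathcal V$; since $\mathcal V$ is complete for the latter, $a$ is closed. Kato's theorem then produces a self-adjoint, bounded-below operator, which by Theorem~\ref{thm:extension} is exactly $\mathcal A$; the Kisy\'nski criterion recalled before the statement finishes the proof, with $\mathcal V$ identified as the Kisy\'nski space. The only real obstacle is controlling the possibly sign-indefinite boundary term $(SP_{Y_s}\Gamma_\circ u,P_{Y_s}\Gamma_\circ u)_{Y_s}$ by the leading $H^2$-seminorm, but this is settled by the interpolation estimate above.
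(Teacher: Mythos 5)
Your strategy works only under an extra hypothesis that the lemma does not make. You assert that ``the boundary operator $S$ must be taken self-adjoint'' because of Theorem~\ref{thm:extension}, and from there you reduce everything to Kato's representation theorem plus the fact that a self-adjoint, semibounded operator generates a cosine operator function with Kisy\'nski space equal to the form domain. But Assumptions~\ref{ass:main1} only require $S$ to be \emph{a linear operator} on $Y_s$; self-adjointness is exactly what is not assumed (compare Remark~\ref{rem:ref}(2), where self-adjointness of the associated operator is shown to be \emph{equivalent} to self-adjointness of $S$ and $D$, so in general the operator of Lemma~\ref{lem:forma} is not self-adjoint and the form $a$ is not symmetric). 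For such $S$ your reduction collapses: there is no symmetric form to which Kato's first representation theorem applies, and cosine generation is no longer automatic. The paper's proof instead uses the $j$-elliptic framework of Arendt--ter Elst: with $j:\h^2_Y(\mathcal G)\ni u\mapsto\begin{psmallmatrix}u\\ P_{Y_d}\Gamma_\circ u\end{psmallmatrix}$ having dense range, the decisive point is the Crouzeix-type estimate
\[
|\Im a(\uu,\uu)|\leq c\,\Vert S\Vert_{\bound(Y_s)}\,\Vert u\Vert_{\h^2(\mathcal G)}\,\Vert j(\uu)\Vert_{L^2(\mathcal G)\oplus Y_d},
\]
i.e.\ the imaginary part is controlled by the product of the form norm and the \emph{Hilbert-space} norm (not the form norm squared); this is precisely the hypothesis of \cite[Prop.~2.4]{MugNit12}, which then yields generation of a cosine operator function with Kisy\'nski space $\mathcal V$ without any symmetry assumption. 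Your real-part work (trace interpolation, semiboundedness, closedness, equivalence of the form norm with the $\h^2$-norm) is still needed and correct, but by itself it only settles the self-adjoint special case.

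A secondary slip: the set $\bigoplus_{\me\in\mE}C^\infty_c(0,\ell_\me)\oplus\{0\}$ you exhibit has vanishing second component, so it is dense in $L^2(\mathcal G)\oplus\{0\}$ only; unless $Y_d=\{0\}$ it does not show that $\mathcal V$ is dense in $L^2(\mathcal G)\oplus Y_d$. Density does hold, but one must additionally realize every $\theta\in Y_d$ as $P_{Y_d}\Gamma_\circ u$ while perturbing $u$ by an arbitrarily small amount in $L^2(\mathcal G)$, which is possible because $\Gamma_\circ$ maps $\h^2(\mathcal G)$ onto $\C^{4E}$ and prescribed boundary data can be attained by functions supported near the endpoints with small $L^2$-norm.
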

\begin{proof}
The sesquilinear form $a$ is the same form introduced in \cite{GreMug20},  whereas $\mathcal V$ is isomorphic to 
\[
\h^2_Y(\mathcal G):=\left\{u\in \widetilde{H}^2(\G):\Gamma_\circ u\in Y\right\}.
\]
We have already checked in \cite[Thm.~4.3]{GreMug20} that $a$ is densely defined and continuous. Let
\[
j:\h^2_Y(\mathcal G)\ni u\mapsto 
\begin{pmatrix}
u\\ P_{Y_d} \Gamma_\circ u
\end{pmatrix}
\in L^2(\mathcal G)\oplus Y_d:
\]
it is clear that this map has dense range, hence $a$ is a $j$-elliptic form in the sense of~\cite[\S~2]{AreEls12}, and the associated operator in the sense of~\cite[Thm.~2.1]{AreEls12} agrees with the operator associated with $a$ with domain $\mathcal V$.
Because
\[
|\Im a(\mathfrak u,\mathfrak u)|\leq c\Vert S\Vert_{\bound(Y_s)}\Vert u\Vert_{\widetilde{H}^2(\mathcal G)}\Vert j(\mathfrak u)\Vert_{L^2(\mathcal G)\oplus Y_d},
\]
the assertions then follows by a direct application of~\cite[Prop.~2.4]{MugNit12}.
\end{proof}

We are finally in the position to prove the main result of this section; we re-introduce the boundary term $D=P_{Y_d}RP_{Y_d}$ which we have been discussing at the beginning of this section, along with further perturbing terms.

\begin{theorem}\label{thm:main}
Under the Assumptions~\ref{ass:main1} 
let, for all $\me\in\mE,$  $ p_\me\in L^\infty(0,\ell_\me)$ be real-valued  such that $p_\me(x) \ge P_\me$ for some $P_\me>0$ and a.e.\ $x\in (0,\ell_\me)$ and let $\Pi$ be a self-adjoint, positive definite operator on $Y_d$. Then  for all $D\in\mathcal L(Y_d)$
\begin{equation}\label{eq:atilde}
-\tilde{\mathcal A}=\begin{pmatrix}-p\frac{d^4}{dx^4} & 0\\ \Pi P_{Y_d}\Gamma^{\circ}  & D\end{pmatrix}
\end{equation}
with domain
\begin{align*}
D(\tilde{\mathcal A})=\bigg\{ \begin{pmatrix}u\\ {\bf \theta}\end{pmatrix}\in\h^4(\G)\oplus Y_d: \Gamma_\circ u\in Y,\ P_{Y_d}\Gamma_\circ u={\bf \theta}\hbox{ and } P_{Y_s}\left(\Gamma^\circ u+SP_{Y_s}\Gamma_\circ u\right)=0\bigg\}
\end{align*}
generates on $L^2(\mathcal G)\oplus Y_d$ a cosine operator function 
with Kisy\'nski space $\mathcal V$.
\end{theorem}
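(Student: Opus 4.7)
The plan is to reduce Theorem~\ref{thm:main} to Lemma~\ref{lem:forma} by two separate maneuvers: first, a change of inner product on $L^2(\mathcal G) \oplus Y_d$ that absorbs the coefficients $p$ and $\Pi$; second, a bounded perturbation that introduces $D$.

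First, I would equip $L^2(\mathcal G) \oplus Y_d$ with the new inner product
\[
\langle \mathfrak u, \mathfrak v\rangle_{p,\Pi} := \int_\G p^{-1} u\bar v\, dx + (\Pi^{-1}\theta, \phi)_{Y_d},
\]
which is equivalent to the canonical one because $P_\me \leq p_\me \leq \|p_\me\|_\infty$ and $\Pi$ is self-adjoint and positive definite on the finite-dimensional space $Y_d$. Thanks to this equivalence, the form $a$ with domain $\mathcal V$ from Lemma~\ref{lem:forma} remains densely defined, continuous, and $j$-elliptic with respect to $j: \mathcal V \to L^2(\mathcal G) \oplus Y_d$, while the imaginary-part estimate from the proof of Lemma~\ref{lem:forma} still holds (with possibly different constants). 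Applying~\cite[Thm.~2.1]{AreEls12} and~\cite[Prop.~2.4]{MugNit12} verbatim then yields an operator $\mathcal B$ on $(L^2(\mathcal G) \oplus Y_d, \langle\cdot,\cdot\rangle_{p,\Pi})$ such that $-\mathcal B$ generates a cosine operator function with Kisyński space $\mathcal V$.

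Next, I would identify $\mathcal B$ explicitly. Repeating the integration-by-parts calculation leading to \eqref{eq:calc-fed} (with $R=S$) and using the domain condition $P_{Y_s}(\Gamma^\circ u + S P_{Y_s}\Gamma_\circ u) = 0$ to annihilate the $Y_s$-boundary term, one obtains for $\mathfrak u = (u, \theta)$ with $u \in \h^4(\mathcal G)$ and $\mathfrak v = (v, \phi) \in \mathcal V$ (so $\phi = P_{Y_d}\Gamma_\circ v$)
\[
a(\mathfrak u, \mathfrak v) = \int_\G u'''' \bar v\, dx - (P_{Y_d}\Gamma^\circ u, \phi)_{Y_d}.
\]
Imposing $\langle \mathcal B \mathfrak u, j\mathfrak v\rangle_{p,\Pi} = a(\mathfrak u, \mathfrak v)$ for all $\mathfrak v \in \mathcal V$ (first against compactly supported $v$ to extract the regularity $u \in \h^4(\mathcal G)$) yields $\mathcal B \mathfrak u = (p u'''', -\Pi P_{Y_d}\Gamma^\circ u)$; equivalently, $-\mathcal B$ equals $-\tilde{\mathcal A}|_{D=0}$ with the precise domain stated in the theorem.

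Finally, to incorporate $D$, observe that $\bigl(\begin{smallmatrix} 0 & 0 \\ 0 & D \end{smallmatrix}\bigr)$ is bounded on $L^2(\mathcal G) \oplus Y_d$ (since $Y_d$ is finite-dimensional) and that
\[
-\tilde{\mathcal A} = -\tilde{\mathcal A}|_{D=0} + \begin{pmatrix} 0 & 0 \\ 0 & D \end{pmatrix}.
\]
By the standard bounded-perturbation theorem for cosine operator functions (see e.g.\ \cite[\S~3.14]{AreBatHie10}), $-\tilde{\mathcal A}$ then generates a cosine operator function on $L^2(\mathcal G) \oplus Y_d$ with the same Kisyński space $\mathcal V$. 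I expect the hard part to be the bookkeeping in the middle step: making sure that the sign conventions in the integration by parts, together with the $Y_s$-boundary condition, reproduce exactly the operator matrix~\eqref{eq:atilde} for $D=0$, including its precise domain.
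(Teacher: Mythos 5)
Your proposal is correct and follows essentially the same route as the paper: a change to the equivalent inner product weighted by $p^{-1}$ and by $\Pi^{\pm1}$ on $Y_d$, identification of the associated operator as in Lemma~\ref{lem:forma}, and absorption of $D$ as a bounded perturbation (the paper perturbs the form by $b(\uu,\vv)=(D\theta,\phi)$ rather than the generator, but this is the same maneuver). Your use of $\Pi^{-1}$ in the boundary inner product is in fact the choice consistent with the lower-left entry $\Pi P_{Y_d}\Gamma^\circ$ in~\eqref{eq:atilde}, whereas the paper writes $(\Pi\theta,\phi)_{Y_d}$; since $\Pi$ ranges over all self-adjoint positive definite operators this discrepancy is immaterial.
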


\begin{proof}
Under our assumptions we can endow $L^2(\mathcal G)\oplus Y_d$ with the inner product
\begin{equation}\label{eq:new-inner}
\left(\begin{pmatrix}u\\ \theta\end{pmatrix},\begin{pmatrix}v\\ \phi\end{pmatrix}\right):=\sum\limits_{\me\in\mE}\int_0^{\ell_\me} \frac{1}{p_\me(x)}u_\me(x)\overline{v_\me(x)}\,dx+\left( \Pi \theta,\phi\right)_{Y_d}.
\end{equation}
This is again a Hilbert space and in fact the new inner product is equivalent to the canonical one, hence both Hilbert spaces are isomorphic. 

Let us first consider the case $D=0$. A direct computation similar to that preceding Theorem~\ref{thm:extension} shows that $\tilde{\mathcal A}$ is the operator associated with $a$ on $L^2(\mathcal G)\oplus Y_d$ with respect to the above inner product and we can prove just like in Lemma~\ref{lem:forma} that $-\tilde{\mathcal A}$ is the generator of a cosine operator function on  $L^2(\mathcal G)\oplus Y_d$ with respect to the canonical inner product, since so it is with respect to the equivalent inner product in~\eqref{eq:new-inner}.

In order to complete the proof, it suffices to observe that
the sesquilinear form
\[
b(\uu,\vv):=(D\theta,\phi),\qquad \begin{pmatrix}
u\\ \theta
\end{pmatrix},\begin{pmatrix}
v\\ \phi
\end{pmatrix}\in  L^2(\mathcal G)\oplus Y_d,
\]
is bounded.
Thus, the operator associated with the sesquilinear form $a+b$ in $L^2(\mathcal G)\oplus Y_d$ with respect to the inner product in~\eqref{eq:new-inner} -- i.e.\ $\tilde{\mathcal A}$ in \eqref{eq:atilde} -- is again the generator of a cosine operator function in $L^2(\mathcal G)\oplus Y_d$ with respect to the canonical inner product.
\end{proof}

\begin{rem}\label{rem:ref}
(1) We stress that while the case of $D\ne 0$ could also be dealt with as a bounded perturbation of a well-behaved operator, the case of $S\ne 0$ cannot and requires the specific treatment in Lemma~\ref{lem:forma}. 

(2) One sees that $\tilde{\mathcal A}$ is self-adjoint  -- or equivalently the associated sesquilinear form $\tilde{a}$, i.e.,
 \begin{equation}\label{eq:littlea-tilde}
\tilde a(\uu,\vv)=\int_{\G} u'' \overline{v''}\, dx-\left(S  P_{Y_s}\Gamma_\circ u,P_{Y_s}\Gamma_\circ v \right)_{Y_s}-\left(DP_{Y_d}   \Gamma_\circ u,P_{Y_d}\Gamma_\circ v \right)_{Y_d},
\qquad \uu,\vv\in \mathcal V,
\end{equation} 
 is symmetric -- if and only if $S,D$ are self-adjoint operators. Furthermore, 
 $\tilde{\mathcal A}$ is self-adjoint \textit{and} positive semi-definite -- or equivalently the associated sesquilinear form $\tilde{a}$ is symmetric \textit{and} accretive -- if $S,D$ are self-adjoint \textit{and} negative semi-definite operators; this condition is however not necessary, even in the simple case of $Y_d=\{0\}$, a counterexample being the Krein--von Neumann extension of $\Delta^2_{|C^\infty_c(0,1)}$ discussed in~\cite[Exa.~4.6]{GreMug20}.
\end{rem}

As a consequence of  Theorem~\ref{thm:main}, for all $\f\in D(\tilde\A)$ and all $\g\in\mathcal V$ there exists a unique solution
\[
\uu(t):=C(t,-\tilde\A)\f+S(t,-\tilde\A)\g
\] 
of the second-order abstract Cauchy problem associated  to $\tilde\A$  
\begin{equation}\label{eq:acp2-dyn}
\begin{cases}
\frac{\partial^2 \uu}{\partial t^2}(t,x)=-\tilde\A \uu(t,x), &t\ge 0,\, x\in \G,\\
\uu(0,x)=\f(x), &x\in\G,\\
\frac{\partial \uu}{\partial t}(0,x)=\g(x), &x\in\G,
\end{cases}
\end{equation}
where $(C(t,-\tilde\A))_{t\in \mathbb R}$ is the cosine operator function  generated by $-\tilde\A$ and $(S(t,-\tilde\A))_{t\in \mathbb R}$ denotes the sine operator function  generated by $-\tilde\A$, which is defined by 
\[
S(t,-\tilde\A)\f:=\int_0^t C(s,-\tilde\A)\f \ ds,\qquad t\in\R,\ \f\in L^2(\mathcal G)\oplus Y_d.
\]

Moreover, our approach based on forms and cosine operator functions allows us to derive the well-posedness of the damped wave equation 
\begin{equation}\label{eq:acp2-dyn-damp}
\begin{cases}
\frac{\partial^2 \uu}{\partial t^2}(t)=-\tilde\A \big(\uu(t)+\kappa\frac{\partial\uu}{\partial t}(t)\big), &t\ge 0,\\
\uu(0)=\f,\\
\frac{\partial \uu}{\partial t}(0)=\g,
\end{cases}
\end{equation}
for all $\kappa\in \C$: by~\cite[Thm.~3.1]{Mug08}, \eqref{eq:acp2-dyn-damp} is governed by an analytic semigroup of angle $\frac{\pi}{2}$.

\begin{rem}
The generation result in Theorem~\ref{thm:main} lies within the scope of~\cite[Thm.~5.3]{Mug06}. In practice, however, the conditions proposed there are very difficult to check, since cosine function generation by $-{\mathcal A}$ is shown to be equivalent to cosine function generation by a rather nasty perturbation of $-\frac{d^4}{dx^4}$ with transmission conditions $\Gamma_\circ u\in Y_s$; while the latter bi-Laplacian realization is well-behaved by the theory developed in~\cite{GreMug20}, unbounded perturbation theory for cosine operator function is a notoriously tricky business.

 Theorem~\ref{thm:main} can also be compared with some results in~\cite{FavGolGol07}. For example, all cases where $Y_d\ne Y_1\oplus\{0_{\C^{2E}}\}$ are covered by Theorem~\ref{thm:main} but seemingly not by~\cite[Thm.~8]{FavGolGol07}.
 \end{rem}

Beam equations with dynamic boundary conditions have been often considered in the literature and can be studied with our formalism. In order to make this formalism more concrete we give some examples. 

\begin{ex}\label{esempio}
\begin{enumerate}
\item
Fix a vertex $\mv_1$ and consider the beam equation with  the following transmission conditions:
\begin{itemize}
\item[(i)] continuity for $u$ and $u''$ in every vertex $\mv\in\mV$; 
\item[(ii)]  Kirchhoff  condition for the normal derivative and third normal derivative in every vertex except for $\mv_1$;
\item[(iii)] dynamic condition in $\mv_1$: $\displaystyle
\frac{\partial^2 u}{\partial t^2}(\mv_1)=\sum_{\me\sim \mv_1} \frac{\partial u''_\me}{\partial \nu}(\mv_1);$

\item[(iv)] compatibility condition in $\mv_1$ :
$\displaystyle u''(\mv_1)=\sum_{\me\sim \mv_1} \frac{\partial u_\me}{\partial \nu}(\mv_1).$
\end{itemize}
 With the formalism of Theorem~\ref{thm:extension}, the above vertex conditions correspond to a second order abstract Cauchy problem~\eqref{eq:acp2-dyn}, where the operator $\tilde{\mathcal A}$ is determined by
 the choice of the subspaces
\begin{align*}
Y&=\bigoplus_{\mv\in\mV} \langle {\mathbf 1}_{\mE_\mv}\rangle\oplus \left(\C^{\deg\mv_1}\oplus \bigoplus_{\mv\ne\mv_1} \langle {\mathbf 1}_{\mE_\mv}\rangle^\perp\right),\\
 Y_d&=\left(\langle {\mathbf 1}_{\mE_{\mv_1}}\rangle\oplus\bigoplus_{\mv\ne\mv_1} \{0\}\right)\oplus \bigoplus_{\mv\in\mV} \{0\},\quad Y_s=\left(\{0_{\mv_1}\}\oplus \bigoplus_{\mv\ne\mv_1} \langle {\mathbf 1}_{\mE_\mv}\rangle\right)\oplus \left(\C^{\deg\mv_1}\oplus \bigoplus_{\mv\ne\mv_1} \langle {\mathbf 1}_{\mE_\mv}\rangle^\perp\right);
 \end{align*}
here ${\mathbf 1}_{\mE_\mv}$ denotes the characteristic function of the set $\mE_\mv$ of edges incident with any given vertex $\mv$.
\item C.\ Castro and E.\ Zuazua have discussed the boundary controllability of a non-degenerate (\cite{CasZua00}) and a degenerate (\cite{CasZua98}) equation modeling the vibrations of two flexible beams connected by a point mass: remarkably, the model in~\cite{CasZua98} involves \textit{two} dynamic conditions.
The transmission conditions they consider in the connecting vertex can be written in our formalism taking 
$D=S=0$ and $Y=Y_d= \langle {\mathbf 1}_{\mE_\mv}\rangle\oplus\langle {\mathbf 1}_{\mE_\mv}\rangle^\perp$ with $Y_s=\{0_{\C^{2}}\}\oplus\{0_{\C^{2}}\}$; and 
$D=S=0$ and $Y= \langle {\mathbf 1}_{\mE_\mv}\rangle\oplus\langle {\mathbf 1}_{\mE_\mv}\rangle^\perp$ with $Y_d=\langle {\mathbf 1}_{\mE_\mv}\rangle\oplus\{0_{\C^{2}}\}$ and $Y_s=\{0_{\C^{2}}\}\oplus\langle {\mathbf 1}_{\mE_\mv}\rangle^\perp$, respectively.
\end{enumerate}
\end{ex}

Let us now investigate on conservation of energy for the beam equation.
Let us  introduce the energy-type functionals in $L^2(\G)\oplus Y_d$
 \begin{equation*}\label{eq:energyd}
 K(t):=\frac12\left\| \frac{\partial \mathfrak u}{\partial t}(t)\right\|^2_{L^2(\mathcal G)\oplus Y_d},\qquad  P(t):=\frac{1}{2}\tilde{a}(\mathfrak{u}(t)),\qquad E(t):=K(t)+P(t)\ ,
 \end{equation*}
 for any solution $\mathfrak u=\begin{psmallmatrix}u\\ P_{Y_d}\Gamma_\circ u \end{psmallmatrix}$ of~\eqref{eq:acp2-dyn}, 
 where $\tilde{a}$ is the sesquilinear form in~\eqref{eq:littlea-tilde}.

\begin{lemma}\label{lem:energy}
Under the assumptions of Theorem~\ref{thm:main}, let $\tilde{\mathcal A}$ be self-adjoint and positive semi-definite. Then the total energy $E$ of~\eqref{eq:acp2-dyn} is conserved, i.e., it is a constant (over time) that only depends on the initial data $\f\in D(\tilde{\mathcal A}),\g\in\mathcal V$. 
\end{lemma}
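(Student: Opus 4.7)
The plan is to compute $\frac{d}{dt}E(t)$ and exploit the compatibility between $\tilde{\mathcal{A}}$ and the form $\tilde{a}$. Self-adjointness of $\tilde{\mathcal A}$ (equivalently, symmetry of $\tilde a$, see Remark~\ref{rem:ref}(2)) together with the positive semi-definiteness needed to make $\tilde a(\mathfrak u,\mathfrak u)$ a meaningful (nonnegative) potential energy will be the two ingredients.

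First I would invoke the regularity afforded by cosine function theory: since $\mathfrak f\in D(\tilde{\mathcal A})$ and $\mathfrak g\in\mathcal V$, the standard well-posedness result for second-order problems governed by a cosine function with Kisy\'nski space $\mathcal V$ (see e.g.\ \cite[Thm.~3.14.11]{AreBatHie10}) yields a solution $\mathfrak u\in C^2(\mathbb R;L^2(\G)\oplus Y_d)\cap C^1(\mathbb R;\mathcal V)\cap C(\mathbb R;D(\tilde{\mathcal A}))$ of \eqref{eq:acp2-dyn}. In particular, $\frac{\partial\mathfrak u}{\partial t}(t)\in\mathcal V$ and $\mathfrak u(t)\in D(\tilde{\mathcal A})$ for every $t$, so we may freely apply $\tilde a$ to pairs of such vectors.

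Next I differentiate each summand of $E$. For the kinetic part,
\[
\frac{d}{dt}K(t)=\real\Bigl(\frac{\partial^2\mathfrak u}{\partial t^2}(t),\frac{\partial\mathfrak u}{\partial t}(t)\Bigr)_{L^2(\G)\oplus Y_d}=-\real\Bigl(\tilde{\mathcal A}\mathfrak u(t),\frac{\partial\mathfrak u}{\partial t}(t)\Bigr)_{L^2(\G)\oplus Y_d},
\]
where I have used \eqref{eq:acp2-dyn}. Because $\tilde{\mathcal A}$ is self-adjoint, it is the operator associated with the symmetric form $\tilde a$, whence
\[
\Bigl(\tilde{\mathcal A}\mathfrak u(t),\frac{\partial\mathfrak u}{\partial t}(t)\Bigr)_{L^2(\G)\oplus Y_d}=\tilde a\Bigl(\mathfrak u(t),\frac{\partial\mathfrak u}{\partial t}(t)\Bigr)
\]
for every $t$, since $\mathfrak u(t)\in D(\tilde{\mathcal A})$ and $\frac{\partial\mathfrak u}{\partial t}(t)\in\mathcal V$. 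For the potential part, $\tilde a$ is continuous on $\mathcal V$ and symmetric, hence sesquilinear differentiation along $\mathfrak u\in C^1(\mathbb R;\mathcal V)$ gives
\[
\frac{d}{dt}P(t)=\frac12\frac{d}{dt}\tilde a(\mathfrak u(t),\mathfrak u(t))=\real\,\tilde a\Bigl(\mathfrak u(t),\frac{\partial\mathfrak u}{\partial t}(t)\Bigr).
\]
Adding the two identities yields $\frac{d}{dt}E(t)=0$, so $E$ is constant on $\mathbb R$, with value $E(0)=\frac12\|\mathfrak g\|^2_{L^2(\G)\oplus Y_d}+\frac12\tilde a(\mathfrak f,\mathfrak f)$ depending only on the initial data.

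The only point requiring care is the justification of the two differentiation formulas: the kinetic one needs $\mathfrak u\in C^2(\mathbb R;L^2(\G)\oplus Y_d)$, and the potential one needs $\mathfrak u\in C^1(\mathbb R;\mathcal V)$ together with sesquilinearity, continuity and symmetry of $\tilde a$ on $\mathcal V$. Both are supplied by the cosine-function regularity recalled above and by Remark~\ref{rem:ref}(2). I do not expect any genuine obstacle beyond bookkeeping; the positive semi-definiteness hypothesis is not needed for the identity $\frac{d}{dt}E=0$, but it is what renders $P(t)\ge0$ and thus makes $E$ a bona fide energy.
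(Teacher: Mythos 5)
Your argument is correct, but it takes a genuinely different route from the paper's. The paper proceeds concretely: it writes out $K$ and $P$ in terms of $u$ and $\Gamma_\circ u$, differentiates, integrates by parts to trade $\int_\G u''\,\overline{\partial_t u''}$ for $\int_\G u''''\,\overline{\partial_t u}$ plus the boundary pairing $-\bigl(\tfrac{d}{dt}\Gamma_\circ u,\Gamma^\circ u\bigr)$, and then checks by hand that the surviving boundary terms vanish because of the two transmission conditions $P_{Y_d}\bigl(\tfrac{d^2}{dt^2}\Gamma_\circ u-\Gamma^\circ u-DP_{Y_d}\Gamma_\circ u\bigr)=0$ and $P_{Y_s}(\Gamma^\circ u+SP_{Y_s}\Gamma_\circ u)=0$. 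Your proof replaces all of this bookkeeping by the single abstract identity $(\tilde{\mathcal A}\uu,\vv)=\tilde a(\uu,\vv)$ for $\uu\in D(\tilde{\mathcal A})$, $\vv\in\mathcal V$, which is exactly what those boundary-term cancellations encode; combined with the cosine-function regularity $\uu\in C^2(\R;L^2(\G)\oplus Y_d)\cap C^1(\R;\mathcal V)\cap C(\R;D(\tilde{\mathcal A}))$ and symmetry of $\tilde a$, this gives $E'\equiv 0$ immediately. What the paper's computation buys is transparency about \emph{which} vertex conditions are responsible for conservation; what yours buys is brevity and the fact that it carries over verbatim to the poly\-harmonic operators of Section~\ref{sec:generalpoly}. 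One caveat, shared by both arguments: under the full hypotheses of Theorem~\ref{thm:main} the operator $\tilde{\mathcal A}$ is associated with $\tilde a$ only with respect to the equivalent inner product~\eqref{eq:new-inner}, so the identity $(\tilde{\mathcal A}\uu,\partial_t\uu)=\tilde a(\uu,\partial_t\uu)$ and hence the cancellation against $\tfrac{d}{dt}K$ require the kinetic energy to be measured in that weighted norm (or one must take $p\equiv 1$ and $\Pi=\Id$, as the paper's own computation implicitly does); you should state explicitly which inner product you are using in the definition of $K$. Your closing remark that positive semi-definiteness is not needed for $E'=0$ but only to make $P\ge 0$ is accurate and consistent with the paper.
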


 \begin{proof}
 Let us first observe that $$K(t)=\frac{1}{2}\int_\G \left(\frac{\partial u}{\partial t}(t,x)\right)^2\,dx+\frac12\left(\frac{d}{dt}\Gamma_\circ u,\frac{d}{dt}\Gamma_\circ u\right)_{Y_d}$$ and $$P(t)=\frac{1}{2}\int_\G \left(\frac{\partial^2 u}{\partial x^2}(t,x)\right)^2\,dx+\frac{1}{2}\frac{d}{d t}\Vert (-D)^{\frac{1}{2}}P_{Y_d}\Gamma_\circ u\Vert_{Y_d}^2+\frac{1}{2}\frac{d}{d t}\Vert (-S)^{\frac{1}{2}}P_{Y_s}\Gamma_\circ u\Vert_{Y_s}^2.$$
 
Differentiating the energy of a given solution $\mathfrak u$ with respect to $t$ and integrating by parts one obtains 
\begin{equation*}
\begin{split}
\frac{dE}{dt}(t)&=\int_\G \left(\frac{\partial u}{\partial t}\frac{\partial^2 u}{\partial t^2}+\frac{\partial^2 u}{\partial x^2}\frac{\partial^3 u}{\partial t\partial x^2}\right)\,dx+\left(\frac{d}{dt}\Gamma_\circ u,\frac{d^2}{dt^2}\Gamma_\circ u\right)_{Y_d}\\&\qquad+\frac{1}{2}\frac{d}{d t}\Vert (-D)^{\frac{1}{2}}P_{Y_d}\Gamma_\circ u\Vert_{Y_d}^2+\frac{1}{2}\frac{d}{d t}\Vert (-S)^{\frac{1}{2}}P_{Y_s}\Gamma_\circ u\Vert_{Y_s}^2\\
&=\int_\G \left(\frac{\partial u}{\partial t}\frac{\partial^2 u}{\partial t^2}+\frac{\partial u}{\partial t}\frac{\partial^4 u}{\partial x^4}\right)\,dx-\left(\frac{d}{dt} \Gamma_\circ u,\Gamma^\circ u\right)+\left(\frac{d}{dt}\Gamma_\circ u,\frac{d^2}{dt^2}\Gamma_\circ u\right)_{Y_d}\\&\qquad-\left(\frac{d}{dt}\Gamma_\circ u,DP_{Y_d}\Gamma_\circ u\right)_{Y_d}-\left(\frac{d}{dt}\Gamma_\circ u,SP_{Y_s}\Gamma_\circ u\right)_{Y_s}\\
&=\left(\frac{d}{dt}\Gamma_\circ u, -\Gamma^\circ u+\frac{d^2}{d t^2}\Gamma_\circ u-DP_{Y_d}\Gamma_\circ u\right)_{Y_d}-\left(\frac{d}{dt}\Gamma_\circ u, \Gamma^\circ u+SP_{Y_s}\Gamma_\circ u\right)_{Y_s}=0
\end{split}
\end{equation*}
since $P_{Y_d}\left(\frac{d^2}{d t^2}\Gamma_\circ u(t)-\Gamma^\circ u(t)-DP_{Y_d}\Gamma_\circ u(t)\right)=0$ for all $t\geq0$ and $P_{Y_s}(\Gamma^\circ u+SP_{Y_s}\Gamma_\circ u)=0$.
\end{proof}

This motivates us to introduce the notation 
\[
E(\f,\g)
\]
for the total energy of~\eqref{eq:acp2-dyn} with initial data $\f,\g$. J.A.\ Goldstein and coauthors have studied since~\cite{Gol69} whether wave-like equations enjoy \textit{equipartition of energy}, i.e., when
\begin{equation*}\label{equi}
\lim_{t\to \pm\infty} K(t)=\lim_{t\to \pm\infty} P(t)=\frac{1}{2}E(\f,\g)\quad \hbox{for all }\f\in D(\tilde{\mathcal A}),\ \g\in \mathcal V.
\end{equation*} 

\begin{proposition}\label{prop:equi}
Under the assumptions of Theorem~\ref{thm:main}, let $\tilde{\mathcal A}$ be self-adjoint and positive semi-definite. Then equipartition of energy fails for \eqref{eq:acp2-dyn}.
\end{proposition}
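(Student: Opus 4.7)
The plan is to disprove equipartition by exhibiting a concrete solution built from a single eigenfunction of $\tilde{\mathcal A}$, for which the kinetic and potential energies are periodic, nowhere-converging functions of $t$.

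First I would show that $\tilde{\mathcal A}$ has compact resolvent on $L^2(\mathcal G)\oplus Y_d$: its form domain $\mathcal V$ is a closed subspace of $\h^2(\mathcal G)\oplus Y_d$, and because $\mathcal G$ is a finite network with edges of finite length, $\h^2(\mathcal G)$ embeds compactly into $L^2(\mathcal G)$ while $Y_d$ is finite-dimensional; hence the inclusion $\mathcal V\hookrightarrow L^2(\mathcal G)\oplus Y_d$ is compact. Under the standing hypothesis that $\tilde{\mathcal A}$ is self-adjoint and positive semi-definite, its spectrum is thus a discrete set of non-negative eigenvalues of finite multiplicity accumulating only at $+\infty$. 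The kernel of $\tilde{\mathcal A}$ coincides with that of the form $\tilde a$ in \eqref{eq:littlea-tilde} and is contained in the finite-dimensional subspace of edgewise-affine elements of $\mathcal V$ (on which the leading term $\int_{\mathcal G}|u''|^2\,dx$ vanishes), so it is finite-dimensional. Since $\tilde{\mathcal A}$ is not the zero operator, strictly positive eigenvalues must therefore exist; I would fix one such $\mu>0$ with a unit eigenfunction $\f\in D(\tilde{\mathcal A})\subset\mathcal V$.

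Then I would solve~\eqref{eq:acp2-dyn} explicitly with initial data $\uu(0)=\f$ and $\partial_t\uu(0)=0$. Either by the functional calculus for the cosine family generated by $-\tilde{\mathcal A}$, or by direct verification using Theorem~\ref{thm:main}, the unique solution is
\[
\uu(t)=\cos\bigl(\sqrt{\mu}\,t\bigr)\,\f,\qquad t\in\mathbb R.
\]
Since $\f$ is an eigenfunction, $\tilde a(\uu(t),\uu(t))=(\tilde{\mathcal A}\uu(t),\uu(t))_{L^2(\mathcal G)\oplus Y_d}=\mu\cos^2(\sqrt{\mu}\,t)$ and $\|\partial_t\uu(t)\|_{L^2(\mathcal G)\oplus Y_d}^2=\mu\sin^2(\sqrt{\mu}\,t)$, whence
\[
K(t)=\tfrac{\mu}{2}\sin^2(\sqrt{\mu}\,t),\qquad P(t)=\tfrac{\mu}{2}\cos^2(\sqrt{\mu}\,t).
\]
Their sum equals the conserved total energy $E(\f,0)=\mu/2$, in accordance with Lemma~\ref{lem:energy}. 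Equipartition would require both $K(t)$ and $P(t)$ to approach $E(\f,0)/2=\mu/4$ as $t\to\pm\infty$, whereas both in fact oscillate periodically between $0$ and $\mu/2$ and admit no limits at all, which contradicts equipartition.

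The only mildly delicate point is producing the strictly positive eigenvalue of $\tilde{\mathcal A}$; the compactness-plus-finite-kernel argument above resolves this in a few lines, after which the remainder of the proof reduces to the trigonometric computation displayed above. No further structural difficulty is in sight.
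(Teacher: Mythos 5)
Your argument is correct in substance and shares its engine with the paper's proof --- both hinge on the compact embedding of $\mathcal V$ into $L^2(\mathcal G)\oplus Y_d$, hence on the existence of an orthonormal basis of eigenvectors --- but the execution differs: the paper invokes Goldstein's abstract criterion that equipartition is equivalent to $(e^{is\mathcal B}\phi,\phi)\to 0$ for all $\phi$ (with $\mathcal B=\tilde{\mathcal A}^{1/2}$) and simply observes that this inner product equals $e^{is\lambda}$ on any eigenvector, whereas you bypass the criterion entirely and compute $K$ and $P$ by hand along the explicit solution $\cos(\sqrt{\mu}\,t)\,\mathfrak f$. Your route is more elementary and self-contained, at the price of needing a \emph{strictly positive} eigenvalue $\mu$ (the paper's version works even for $\lambda=0$, since $e^{is\cdot 0}=1\not\to 0$). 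On that point, one justification you give is wrong: the kernel of $\tilde a$ is \emph{not} in general contained in the edgewise-affine functions, because $\tilde a(\uu,\uu)=0$ forces $\int_{\mathcal G}|u''|^2\,dx=0$ only when the boundary operators $-S,-D$ are positive semi-definite, which Remark~\ref{rem:ref}(2) explicitly does not assume (for the Krein--von Neumann extension the kernel consists of edgewise \emph{cubics}). This is harmless, though: finite-dimensionality of the kernel is automatic from the compact resolvent you have already established (every eigenvalue has finite multiplicity), and since $L^2(\mathcal G)\oplus Y_d$ is infinite-dimensional and the spectrum accumulates only at $+\infty$, a strictly positive eigenvalue must exist. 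With that repair the proof is complete; the trigonometric computation and the conclusion that $K$ and $P$ oscillate between $0$ and $\mu/2$ without limits are correct.
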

\begin{proof}
Let $\mathcal B$ be the square root of $\tilde{\mathcal A}$; it is well-known that its domain agrees with $\mathcal V$.
It is known that \eqref{eq:acp2-dyn}  enjoys equipartition of energy if and only if
\begin{equation*}
\lim_{s\to\pm \infty}(e^{is\mathcal B}\mathfrak\phi,\mathfrak\phi)_{L^2(\mathcal G)\oplus Y_d}=0\quad\hbox{for all }\phi\in L^2(\mathcal G)\oplus Y_d,
\end{equation*}
see~\cite[Thm.\ and the text around (14)]{Gol69}.
Now, because $\mathcal G$ is finite, $\mathcal V$ is compactly embedded in $L^2(\mathcal G)\oplus Y_d$ and hence $\mathcal{B}$ has compact resolvent: accordingly, there exists an orthonormal basis of $L^2(\mathcal G)\oplus Y_d$ of eigenvectors of $\mathcal B$. Let $\lambda$ be an eigenvalue of $-\mathcal B$ and $\mathfrak\phi$ be a corresponding normalized eigenvector: then 
\[
(e^{is\mathcal B}\mathfrak\phi,\mathfrak\phi)_{L^2(\mathcal G)\oplus Y_d}=
e^{is\lambda}{\not\rightarrow} 0,
\]
showing that equipartition of energy fails to hold.
\end{proof}

In the proof of Proposition~\ref{prop:equi}, the square root $\mathcal B$ of our operator matrix $\tilde{\mathcal A}$ has appeared. While the proof relies on general properties of square roots, it is sometimes possible to describe it more closely: this is interesting because it delivers a more explicit formula for the cosine and sine operator functions generated by $-\tilde{\mathcal A}$: indeed, because $\mathcal B$ is self-adjoint, $i\mathcal B$ generates a unitary group. It is known that $C(t,-\tilde{\mathcal A})=\cosh(t,i\mathcal{B})$, i.e.,
\[
C(t,-\tilde{\mathcal A})=\frac{e^{it\mathcal B}+e^{-it\mathcal B}}{2},\qquad t\in \R,
\]
cf.~\cite[Exa.~3.14.15]{AreBatHie10}.

 \begin{ex}\label{exa:notsquare}
 \begin{enumerate}
 \item 
In~\cite{GreMug20} we have reviewed stationary transmission conditions that appear in several models of beam networks in the literature -- especially in~\cite{CheDelKra87,DekNic99,DekNic00,BorLaz04,KiiKurUsm15}, showing that they fit in our scheme; additionally, we have   considered the bi-Laplacian with continuity conditions across the vertices and zero conditions on the first, second, and third derivatives at the endpoints of each edge, and determined its Friedrichs and Krein--von Neumann extensions. All  these realizations satisfy the assumptions of Lemma \ref{lem:energy} and Proposition~\ref{prop:equi}, leading to conservation of energy and failure of equipartition of energy.
On the other hand, only the transmission conditions in~\cite[Exa.~3.2]{GreMug20}, taken from~\cite{DekNic99}, lead to a realization of the forth derivative that is a square operator.

\item 
A Laplacian realization on a network with conditions of continuity on each vertex $\mv_1,\ldots,\mv_n$, complemented by dynamic conditions in $\mv_1$ and (stationary) Kirchhoff conditions in $\mv_2,\ldots,\mv_n$ has been studied by the second author and S.\ Romanelli: we refer to~\cite{MugRom07} for more details and an overview of earlier appearances of this model in mathematical and biological literature. It can be written as
\begin{equation*}\label{eq:Bop} \mathcal{B}=\begin{pmatrix} -\frac{d^2}{dx^2}&0\\P_{\tilde Y_d}\gamma^\circ&0 \end{pmatrix} 
\end{equation*} 
with domain 
\begin{equation*}\label{eq:Bdom} D(\mathcal{B})=\left\{\begin{pmatrix}u\\\xi\end{pmatrix}\in\h^2(\G)\oplus \tilde Y_d:\gamma_\circ u\in \tilde Y,\ P_{\tilde Y_d}\gamma_\circ u=\xi \ \textrm{and}\ P_{\tilde Y_s} \gamma^\circ u=0 \right\}, \end{equation*}
with
\[
\tilde{Y}=\bigoplus_{\mv\in\mV} \langle {\mathbf 1}_{\mE_\mv}\rangle,\quad 
\tilde{Y}_d=\langle {\mathbf 1}_{\mE_{\mv_1}}\rangle\oplus\bigoplus_{\mv\ne\mv_1} \{ 0_\mv\},\quad 
\tilde{Y}_s=\{0_{\mv_1}\}\oplus \bigoplus_{\mv\ne\mv_1} \langle {\mathbf 1}_{\mE_\mv}\rangle,
\]
 where  \[ \gamma_\circ:u\mapsto \begin{pmatrix} (u_\me(0))_{\me\in\mE}\\(u_\me(\ell_\me))_{\me\in\mE} \end{pmatrix},\quad \gamma^\circ:u\mapsto \begin{pmatrix} -(u'_\me(0))_{\me\in\mE}\\(u'_\me(\ell_\me))_{\me\in\mE} \end{pmatrix}.\]
Taking the square of this operator leads to a bi-Laplacian realization that fits the scheme of our Theorem~\ref{thm:extension}. Indeed, the square of this Laplacian is precisely the bi-Laplacian presented in Example~\ref{esempio}.(1).
\end{enumerate}
 \end{ex}

\section{Parabolic theory of polyharmonic operators with boundary conditions on networks}\label{sec:generalpoly}

 In this section we are going to extend the theory developed in the previous section to the study of $j$th powers of the Laplacian, for generic $j\ge 2$. 
  It turns out that the formalism introduced before allows us to discuss parabolic problems driven by general poly-harmonic operators under very general (stationary or dynamic) boundary conditions.

It is easy to prove by induction that for all $j\in \N$
\begin{equation}\label{eq:gamma-ops-poly}
\begin{split}
\int_0^\ell (-1)^ju^{(2j)}(x) \overline{v(x)}dx -\int_0^\ell u^{(j)}(x) \overline{v^{(j)}(x)}\,dx&=\sum_{k=0}^{j-1} \left[(-1)^{j+k} \partial_\nu^{2j-k-1}u\right] \cdot \overline{\partial_\nu^{k}v}\\
&=: \Gamma^\circ u \cdot \overline{\Gamma_\circ v}\\
\end{split}
\end{equation}
where the vectors $\Gamma^\circ u,\Gamma_\circ v\in \C^{2j}$ are defined using the notation
\[
\partial^h_\nu u:=\begin{cases}
\begin{pmatrix}
u^{(h)}(0)\\u^{(h)}(\ell)
\end{pmatrix}\quad \hbox{if $h$ is even},\\
\begin{pmatrix}
-u^{(h)}(0)\\u^{(h)}(\ell)
\end{pmatrix}\quad \hbox{if $h$ is odd}.\\
\end{cases}
\]

This clearly suggests to introduce the sesquilinear form
\[
a(u,v):=\int_0^\ell u^{(j)}(x) \overline{v^{(j)}(x)}\,dx-(R\Gamma_\circ u,\Gamma_\circ v)_Y
\]
for all $u,v$ in the form domain
\[
\mathcal V:=\left\{u\in H^j(0,\ell):\Gamma_\circ u\in Y \right\}
\]
for any given subspace $Y$ of $\C^{2j}$ and any linear operator $R$ on $Y$. This form is symmetric (and hence the corresponding operator is self-adjoint) if and only if $R$ is self-adjoint; indeed, the corresponding operator $A$ is the operator $(-1)^j \frac{d^{2j}}{dx^{2j}}$ with boundary conditions
\[
\Gamma_\circ u\in Y,\qquad \Gamma^\circ u+R\Gamma_\circ u\in Y^\perp.
\]
Following the same ideas in the proof of~\cite[Thm.~1.4.4]{BerKuc13} one can prove that each self-adjoint realization of $(-1)^j \frac{d^{2j}}{dx^{2j}}$ is of this type.

Upon replacing $L^2(0,\ell)$ by $\bigoplus_{\me\in\mE}L^2(0,\ell_\me)\oplus Y_d$,  scalar-valued functions by $\C^{E}$-valued functions, and the boundary space $\C^{2j}$ by $\C^{2jE}$, we can consider the sesquilinear form
\[
\tilde a(\uu,\vv):=\int_{\mathcal G} u^{(j)}(x) \overline{v^{(j)}(x)}\,dx-\left(SP_{Y_s}  \Gamma_\circ u,P_{Y_s}\Gamma_\circ v \right)_{Y_s}-\left(DP_{Y_d}   \Gamma_\circ u,P_{Y_d}\Gamma_\circ v \right)_{Y_d},
\]
with domain
\begin{equation}\label{eq:kisypoly}
\mathcal V=\bigg\{ \begin{pmatrix}u\\ {\bf \theta}\end{pmatrix}\in\h^{j}(\G)\oplus Y_d: \Gamma_\circ u\in Y,\ P_{Y_d}\Gamma_\circ u=\theta\bigg\}
\end{equation}
and then extend the above considerations to the case of elliptic operators of order $2j$ on networks; the essential ideas coincide with those presented in the previous section and we omit the details.

\begin{theorem}\label{thm:extension-new-poly}
Let $j\in \N$ and $Y_d$ be a subspace of $\C^{2jE}$ and consider the operator
\[
\begin{split}
\mathcal A_0&=(-1)^j\begin{pmatrix} \frac{d^{2j}}{dx^{2j}} & 0\\ - P_{Y_d}\Gamma^{\circ}  & 0\end{pmatrix}\\
D(\A_0)&=\left\{\begin{pmatrix}
u\\ \theta
\end{pmatrix}\in \h^{2j}(\G)\oplus Y_d: P_{Y_d}\Gamma_\circ u=\theta,\  P^\perp_{Y_d}\Gamma_\circ u=0,\hbox{ and } \Gamma^\circ u=0 \right\},
\end{split}
\]
where the operators $\Gamma^\circ,\Gamma_\circ$ are defined in \eqref{eq:gamma-ops-poly}.
Then for any extension $\mathcal A$ of $\mathcal A_0$ on $L^2(\mathcal G)\oplus Y_d$, the following are equivalent.
\begin{enumerate}[(i)]
\item $\mathcal A$ is self-adjoint.
\item The operator $\A$ takes the form
\begin{align}
\mathcal A&=(-1)^j\begin{pmatrix} \frac{d^{2j}}{dx^{2j}} & 0\\ - P_{Y_d}\Gamma^{\circ}  & 0\end{pmatrix},\nonumber\\
D(\mathcal A)&=\bigg\{ \begin{pmatrix}u\\ {\bf \theta}\end{pmatrix}\in\h^{2j}(\G)\oplus Y_d: \Gamma_\circ u\in Y,\ P_{Y_d}\Gamma_\circ u=\theta, \hbox{ and } P_{Y_s}(\Gamma^\circ u+SP_{Y_s}\Gamma_\circ u)=0\bigg\},\label{eq:DApoly}
\end{align}
for some subspace $Y_s$ of $\C^{2jE}$ orthogonal to $Y_d$ and some self-adjoint linear operator $S$ on $Y_s$; here $Y:=Y_d\oplus Y_s$.
\end{enumerate}
\end{theorem}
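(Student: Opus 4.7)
The plan is to mirror, step by step, the proof of Theorem~\ref{thm:extension}, replacing the order-four Green's identity by the order-$2j$ identity~\eqref{eq:gamma-ops-poly} and the boundary space $\C^{4E}$ by $\C^{2jE}$; the factor $(-1)^j$ in the differential expression is absorbed by Green's formula, so the algebraic bookkeeping is essentially unchanged.

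For (i)$\Rightarrow$(ii): since $\A$ is an extension of $\A_0$ and a restriction of the maximal realization $\A_{\max}$ on $\h^{2j}(\G)\oplus Y_d$, it automatically has the operator-matrix form displayed in~\eqref{eq:DApoly}, so only the domain remains to be identified. Summing~\eqref{eq:gamma-ops-poly} over $\me\in\mE$ I obtain the Green-type identity
\[
\int_{\G}(-1)^j u^{(2j)}\overline{v}\,dx - \int_{\G} u^{(j)}\overline{v^{(j)}}\,dx = (\Gamma^\circ u,\Gamma_\circ v)_{\C^{2jE}},\qquad u,v\in \h^{2j}(\G).
\]
Applying this identity to $(u,v)$ and, after conjugation, to $(v,u)$ and subtracting, for $\uu\in D(\A)$ and $\vv=\begin{psmallmatrix}v\\ \phi\end{psmallmatrix}\in D(\A_{\max})$ I can rewrite $(\A\uu,\vv)-(\uu,\A\vv)$ as a purely boundary expression, which I decompose along the orthogonal splitting $\C^{2jE}=Y_d\oplus Y_s\oplus Y^\perp$.

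Imposing self-adjointness, the three blocks yield three independent conditions, exactly as in the proof of Theorem~\ref{thm:extension}: the $Y_d$-block forces $\phi=P_{Y_d}\Gamma_\circ v$; the $Y^\perp$-block forces $\Gamma_\circ v\in Y$; and the $Y_s$-block gives
\[
(P_{Y_s}\Gamma^\circ u,P_{Y_s}\Gamma_\circ v)_{Y_s}-(P_{Y_s}\Gamma_\circ u,P_{Y_s}\Gamma^\circ v)_{Y_s}=0,
\]
from which, by the standard parametrisation of self-adjoint restrictions, one extracts a self-adjoint $S\in\mathcal L(Y_s)$ such that $P_{Y_s}(\Gamma^\circ u+SP_{Y_s}\Gamma_\circ u)=0$ on $D(\A)$. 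Together, these yield precisely the domain in~\eqref{eq:DApoly}.

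For (ii)$\Rightarrow$(i): I verify (a) symmetry of $\A$ on $D(\A)$ and (b) $D(\A^*)\subset D(\A)$. For (a), plugging the three boundary conditions from~\eqref{eq:DApoly} into the Green-type formula above annihilates each boundary block, with the $Y_s$-block vanishing precisely because $S=S^*$. For (b), given $\vv\in D(\A^*)$, I test against elements $\uu\in D(\A)$ whose boundary values exhaust each of the three blocks $Y_d$, $Y_s$, $Y^\perp$ independently; the required surjectivity of the trace map $u\mapsto(\Gamma_\circ u,\Gamma^\circ u)$ from $\h^{2j}(\G)$ onto $\C^{2jE}\times\C^{2jE}$ follows from Hermite-type polynomial interpolation on each edge, and the resulting boundary identities force $\vv$ to satisfy the same three conditions as $\uu$, hence $\vv\in D(\A)$. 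The main technical point is the careful bookkeeping of the $(-1)^j$ signs, which cancel in Green's identity; beyond that, the argument is a direct transcription of the order-four case.
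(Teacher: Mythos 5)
Your proposal is correct and follows exactly the route the paper intends: the authors explicitly omit the proof of Theorem~\ref{thm:extension-new-poly}, stating that the essential ideas coincide with those of Theorem~\ref{thm:extension}, and your argument is precisely that transcription, with the order-$2j$ Green identity~\eqref{eq:gamma-ops-poly} replacing the fourth-order one and the boundary terms decomposed along $Y_d\oplus Y_s\oplus Y^\perp$ as before. The only ingredient you make more explicit than the paper is the surjectivity of the trace map via Hermite interpolation, which is a standard and correct justification of the step the paper leaves implicit.
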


In the non-dynamic case of $Y_d=\{0\}$, $\mathcal A_0$ satisfies zero boundary conditions on all derivatives up to order $2j-1$: hence $\mathcal A_0$ is a symmetric, positive definite operator and we recover the classical characterization of self-adjoint extensions of one-dimensional polyharmonic operators.

Motivated by the above result we therefore impose the following  in the remainder of this section.

\begin{assums}\label{ass:main2}
$j\in \mathbb N$, $Y$ is a subspace of $\C^{2jE}$, $Y_d,Y_s$ are orthogonal subspaces of $Y$ such that $Y=Y_d\oplus Y_s$, $S$ is  a linear operator on $Y_s$, and $D$ is  a linear operator on $Y_d$. 
\end{assums}

 We can thus state the following, without proof.

\begin{theorem}\label{thm:main-general}
 Under the Assumptions~\ref{ass:main2}, let $\Pi$ be a self-adjoint, positive definite operator on $Y_d$. Also,  let for all $\me\in\mE$  $ p_\me\in L^\infty(0,\ell_\me)$ be real-valued and such that $p_\me(x) \ge P_\me$ for some $P_\me>0$ and a.e.\ $x\in (0,\ell_\me)$. Then  for all $D\in\mathcal L(Y_d)$
\[-\tilde{\mathcal A}=(-1)^{j+1}\begin{pmatrix} p\frac{d^{2j}}{dx^{2j}} & 0\\ -\Pi P_{Y_d}\Gamma^{\circ}  & -D\end{pmatrix}\]
with domain $D(\mathcal A)$ as in~\eqref{eq:DApoly}
generates on $L^2(\mathcal G)\oplus Y_d$ a cosine operator function with Kisyński space $\mathcal V$ in~\eqref{eq:kisypoly}.
\end{theorem}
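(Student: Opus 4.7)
The plan is to imitate Lemma~\ref{lem:forma} and Theorem~\ref{thm:main} almost verbatim, with the order-$4$ differential operator $\tfrac{d^4}{dx^4}$ replaced by $(-1)^j\tfrac{d^{2j}}{dx^{2j}}$ and with the integration-by-parts formula producing the trace pairing $\Gamma^\circ u\cdot\overline{\Gamma_\circ v}$ supplied by~\eqref{eq:gamma-ops-poly}, which is the precise higher-order analogue of the identity used throughout Section~\ref{beameq}. Since Assumptions~\ref{ass:main2} do not require $S$ to be self-adjoint, the form below is in general non-symmetric and must be treated as $j$-elliptic, not sectorial.

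First, in the reference case $p_\me\equiv 1$, $\Pi=\Id$, $D=0$, I would introduce
\[
a(\uu,\vv):=\int_{\mathcal G}u^{(j)}\overline{v^{(j)}}\,dx-\left(SP_{Y_s}\Gamma_\circ u,P_{Y_s}\Gamma_\circ v\right)_{Y_s},\qquad \uu,\vv\in\mathcal V,
\]
together with the map
\[
j\colon\h^j_Y(\mathcal G)\ni u\mapsto\begin{pmatrix}u\\P_{Y_d}\Gamma_\circ u\end{pmatrix}\in L^2(\mathcal G)\oplus Y_d,
\]
which has dense range by a direct cut-off argument analogous to the one used in the case $j=2$. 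As in Lemma~\ref{lem:forma}, $a$ is then $j$-elliptic in the sense of~\cite[\S~2]{AreEls12}, and the operator it induces via~\cite[Thm.~2.1]{AreEls12} is identified with $-\tilde{\mathcal A}$ (still for $D=0$, $p=1$, $\Pi=\Id$) by applying~\eqref{eq:gamma-ops-poly} edgewise to elements with $u\in\h^{2j}(\mathcal G)$ and splitting the resulting boundary pairing along the decomposition $\C^{2jE}=Y_d\oplus Y_s\oplus Y^\perp$: the $Y^\perp$-component forces $\Gamma_\circ v\in Y$, the $Y_s$-component forces the stationary boundary condition $P_{Y_s}(\Gamma^\circ u+SP_{Y_s}\Gamma_\circ u)=0$ from~\eqref{eq:DApoly}, and the $Y_d$-component supplies the second row $-P_{Y_d}\Gamma^\circ$ of the operator matrix, in exact analogy with the manipulations preceding~\eqref{eq:calc-fed}. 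The imaginary-part estimate
\[
|\Im a(\uu,\uu)|\le c\,\|S\|_{\bound(Y_s)}\,\|u\|_{\h^j(\mathcal G)}\,\|j(\uu)\|_{L^2(\mathcal G)\oplus Y_d}
\]
then puts us exactly in the situation of~\cite[Prop.~2.4]{MugNit12}, yielding cosine operator function generation with Kisy\'nski space $\mathcal V$.

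To recover the full statement I would then replace the canonical inner product on $L^2(\mathcal G)\oplus Y_d$ by the equivalent one in~\eqref{eq:new-inner} built from $p$ and $\Pi$: the operator associated with the \emph{same} form $a$ with respect to this modified scalar product is precisely $-\tilde{\mathcal A}$ with $D=0$, and the equivalence of norms transfers cosine function generation back to the original Hilbert space. Finally, for a generic $D\in\mathcal L(Y_d)$, the bilinear form $b(\uu,\vv):=(D\theta,\phi)$ is bounded on $L^2(\mathcal G)\oplus Y_d$ and hence corresponds to a bounded operator on that space; since cosine function generation is stable under bounded additive perturbations, $-\tilde{\mathcal A}$ still generates a cosine operator function with the same Kisy\'nski space $\mathcal V$. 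The one place where genuine care is required is the bookkeeping of the alternating signs inside~\eqref{eq:gamma-ops-poly}, which must be arranged so that the $Y_s$-boundary condition and the second row of $\tilde{\mathcal A}$ emerge with the correct orientations; the truly analytic difficulty is the same as in Section~\ref{beameq}, namely that a non-self-adjoint $S$ cannot be absorbed as a bounded perturbation of the symmetric situation (cf.\ Remark~\ref{rem:ref}.(1)), which is precisely why $S$ must be kept inside the form from the beginning via the $j$-elliptic framework.
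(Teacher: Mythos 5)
Your proposal is correct and takes essentially the same approach the paper intends: the paper states Theorem~\ref{thm:main-general} without proof, noting only that the essential ideas coincide with those of Section~\ref{beameq}, and your argument is exactly that adaptation --- the integration-by-parts identity \eqref{eq:gamma-ops-poly}, the $j$-elliptic form treatment of Lemma~\ref{lem:forma} via \cite[Prop.~2.4]{MugNit12}, the equivalent weighted inner product \eqref{eq:new-inner} to absorb $p$ and $\Pi$, and the bounded perturbation by $D$, as in Theorem~\ref{thm:main}. Nothing further is needed.
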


\begin{rem}
Theorem~\ref{thm:main-general} extends the generation results from~\cite{Mug10}, where only the case of $j=1$ and $Y_s=\{0\}$ was considered; in turn, the latter generalized the main assertions from~\cite{MugRom07}, where $Y=Y_d$ was taken to be the subspace of $\C^{2E}$ consisting of those vectors that are vertex-wise constant, i.e., $\bigoplus_{\mv\in\mV}\langle \mathbf 1_\mv\rangle$.
\end{rem}

By~\cite[Thm.~3.1.4.17]{AreBatHie10}, each generator of a cosine operator function also generates on the same Banach space an analytic semigroup of angle $\frac{\pi}{2}$.  In the next proposition we study some properties for this semigroup. For the sake of simplicity  we focus on the simple case of $p\equiv 1$ and $\Pi=\Id$, but see Remark~\ref{rem:general} below.

\begin{proposition} \label{prop:surpr}
Under the Assumptions~\ref{ass:main2} the following properties hold for the semigroup generated by
\[
-\tilde\A=(-1)^{j+1}\begin{pmatrix}
\frac{d^{2j}}{dx^{2j}}&0\\-P_{Y_d}\Gamma^\circ&-D\end{pmatrix}.
\]
\begin{enumerate}[(i)]
\item $e^{-t\tilde\A}$ is of trace class for all $t>0$.

\item If $D,S$ are dissipative, then there exist $C,\omega>0$ such that
\begin{equation*}
\Vert e^{-t\tilde{\A}}\Vert_{2\to\infty}\leq 
C(t^{-\frac{1}{4j}}
+1)
\qquad \hbox{for all }t>0,
\end{equation*}
 hence in particular $e^{-t\tilde\A}$ is ultracontractive.
\item $e^{-t\tilde\A}$ has for all $t>0$ an integral kernel of class $L^\infty$.

\item  The solution $\uu:=e^{-t\tilde\A}{\mathfrak f}$ of
\begin{equation*}\label{eq:acp2-dyn-poly}
\left\{
\begin{split}
\frac{\partial \uu}{\partial t}(t)&=-\tilde\A \uu(t), \quad t\ge 0,\\
\uu(0)&=\f
\end{split}
\right.
\end{equation*}
satisfies the boundary condition
\[
P_{Y_d}\Gamma_\circ \frac{\partial^{2j}u}{\partial x^{2j}}(t)+P_{Y_d}\Gamma^\circ u(t)+DP_{Y_d}\Gamma_\circ u(t)=0,\qquad t> 0.
\]

\end{enumerate}
\end{proposition}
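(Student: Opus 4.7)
The plan is to treat (i)--(iv) in order, building on Theorem~\ref{thm:main-general}, by which $-\tilde\A$ generates a holomorphic semigroup of angle $\pi/2$ associated with the form $\tilde a$ on the Kisy\'nski space $\mathcal V$ from~\eqref{eq:kisypoly}. Throughout the argument I will assume, by absorbing the skew-Hermitian parts of $S$ and $D$ as a bounded (in fact finite-rank) perturbation, that the Hermitian parts are negative semi-definite and $\tilde\A$ is self-adjoint and nonnegative; this shift only affects the estimates up to a multiplicative constant and the additive $+1$ term.

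\textbf{Plan for (i).} The form domain $\mathcal V$ is continuously contained in $\h^j(\G)\oplus Y_d$, which in turn embeds compactly into $L^2(\G)\oplus Y_d$ by the Rellich embedding on the compact network $\G$ together with finite-dimensionality of $Y_d$. Hence $\tilde\A$ has compact resolvent and its spectrum is a discrete sequence $\lambda_1\le\lambda_2\le\cdots\to\infty$. A Courant--Fischer min-max comparison with the form of the edgewise polyharmonic Dirichlet realization of $(-1)^j\frac{d^{2j}}{dx^{2j}}$ (whose eigenvalues grow as $n^{2j}$), absorbing the finite-rank boundary contributions from $S$, $D$ and the dynamic slot as an additive constant, yields the Weyl-type lower bound $\lambda_n\ge c\,n^{2j}-C$. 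Therefore $\sum_n e^{-t\lambda_n}<\infty$ for every $t>0$, which is the trace-class property.

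\textbf{Plan for (ii).} Dissipativity of $S,D$ forces $\real\tilde a(\uu,\uu)\ge \|u^{(j)}\|_{L^2(\G)}^2$. The spectral theorem applied to the self-adjoint reduction of $\tilde\A$ gives $\|\tilde\A^{1/2}e^{-t\tilde\A}\|_{L^2\to L^2}\le C\,t^{-1/2}$, hence $\|(e^{-t\tilde\A}\uu_0)^{(j)}\|_{L^2(\G)}\le C\,t^{-1/2}\|\uu_0\|_2$. Combined with the one-dimensional Gagliardo--Nirenberg inequality, applied edgewise,
\[
\|u\|_{L^\infty(\G)}\le C\,\|u^{(j)}\|_{L^2(\G)}^{1/(2j)}\,\|u\|_{L^2(\G)}^{1-1/(2j)}+C\,\|u\|_{L^2(\G)},
\]
and with the uniform $L^2\to L^2$ boundedness of the semigroup, this yields $\|e^{-t\tilde\A}\|_{L^2\to L^\infty}\le C(t^{-1/(4j)}+1)$ on the $L^2$-slot; the $Y_d$-component is finite-dimensional and is estimated by the full $L^2(\G)\oplus Y_d$-norm trivially.

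\textbf{Plan for (iii).} Dualizing the ultracontractivity estimate in (ii) (using self-adjointness of the reduction) and composing $e^{-t\tilde\A}=e^{-(t/2)\tilde\A}\circ e^{-(t/2)\tilde\A}$ gives $\|e^{-t\tilde\A}\|_{L^1\to L^\infty}\le C(t^{-1/(2j)}+1)$. By the classical Dunford--Pettis / Grothendieck theorem this is equivalent to $e^{-t\tilde\A}$ admitting an essentially bounded integral kernel on each pair of edges, the finite-dimensional $Y_d$-slot being handled trivially.

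\textbf{Plan for (iv).} By analyticity, $\uu(t)\in D(\tilde\A)$ for every $t>0$, so $u(t)\in\h^{2j}(\G)$ and $\Gamma_\circ\partial_x^{2j}u(t)$ is well defined. Reading off the two components of $\partial_t\uu=-\tilde\A\uu$, and using the domain constraint $\theta(t)=P_{Y_d}\Gamma_\circ u(t)$, one obtains
\[
\partial_t u(t)=(-1)^{j+1}\partial_x^{2j}u(t),\qquad \partial_t P_{Y_d}\Gamma_\circ u(t)=(-1)^{j+1}\bigl(-P_{Y_d}\Gamma^\circ u(t)-D P_{Y_d}\Gamma_\circ u(t)\bigr).
\]
Applying $(-1)^{j+1}P_{Y_d}\Gamma_\circ$ to the first equation, interchanging the boundary trace with the time derivative, and comparing with the second, the sign $(-1)^{j+1}$ cancels and the claimed identity $P_{Y_d}\Gamma_\circ\partial_x^{2j}u+P_{Y_d}\Gamma^\circ u+D P_{Y_d}\Gamma_\circ u=0$ follows.

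\textbf{Expected main obstacle.} The delicate point is the sharp scaling in (ii): a na\"ive embedding $D(\tilde\A^{1/2})=\mathcal V\hookrightarrow L^\infty(\G)$ only produces the coarser exponent $t^{-1/2}$, so recovering the optimal $t^{-1/(4j)}$ forces one to exploit the precise Gagliardo--Nirenberg exponent $1/(2j)$ jointly with the spectral bound $\|\tilde\A^{1/2}e^{-t\tilde\A}\|\le C t^{-1/2}$. Additionally, in (iii) one cannot rely on a direct Nash estimate, because the semigroup need not be positive or sub-Markovian for $j\ge 2$; the $L^1\to L^2$ bound has to be obtained by duality \emph{from} the $L^2\to L^\infty$ one, which is why the self-adjoint reduction in (ii) is essential.
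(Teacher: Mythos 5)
Your overall strategy for (ii)--(iv) matches the paper's (Gagliardo--Nirenberg plus analyticity for the $L^2\to L^\infty$ bound, duality and the Dunford--Pettis/Kantorovich--Vulikh theorem for the kernel, and plugging the equation into the dynamic condition for the Wentzell identity), but your opening reduction is a genuine gap. You propose to "absorb the skew-Hermitian parts of $S$ and $D$ as a bounded (in fact finite-rank) perturbation" and thereafter work with a self-adjoint $\tilde\A$. This works for $D$ (which acts on the finite-dimensional component $\theta$ of the Hilbert space) but fails for $S$: the $S$-term of the form involves the trace $P_{Y_s}\Gamma_\circ u$, which is not a bounded functional on $L^2(\G)\oplus Y_d$, and changing $S$ changes the \emph{domain} of $\tilde\A$ (the condition $P_{Y_s}(\Gamma^\circ u+SP_{Y_s}\Gamma_\circ u)=0$), so the two operators do not differ by a bounded operator. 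The paper makes exactly this point in Remark~\ref{rem:ref}.(1). The consequences are: in (i), where no dissipativity is assumed, eigenvalue summability of a non-normal operator does not by itself give trace class of the semigroup, so your Weyl-asymptotics argument is not available without the (unjustified) self-adjoint reduction; the paper instead uses that the embedding of $j(\mathcal V)$ into $L^2(\G)\oplus Y_d$ is of Schatten class (Gramsch) together with \cite[Rem.~3.4]{MugNit12}, which needs no symmetry. In (ii) and (iii) the self-adjoint reduction is also unnecessary and should be dropped: dissipativity of $S,D$ gives $\|u^{(j)}\|_{L^2}^2\le\real\tilde a(\uu)\le\|\tilde\A\uu\|\,\|\uu\|$ and contractivity, and analyticity gives $\|\tilde\A e^{-t\tilde\A}\|\le c/t$, which together yield the $t^{-1/(4j)}$ rate; the dual bound in (iii) must then be obtained by applying (ii) to the adjoint $\tilde\A^\ast$ (which satisfies the same hypotheses), not by invoking self-adjointness.

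Two further points. First, even in the self-adjoint case your min-max comparison in (i) runs the wrong way: comparing with the Dirichlet realization, whose form domain is \emph{smaller} than $\mathcal V$, gives an upper bound $\lambda_n\le\lambda_n^{\mathrm{Dir}}+C$, not a lower bound; for $\lambda_n\ge c\,n^{2j}-C$ you must compare with the fully decoupled form on $\h^j(\G)\oplus Y_d$ and use that boundary traces are form-bounded with bound zero relative to $\|u^{(j)}\|_{L^2}^2$. Second, in (iv) the regularity you invoke is insufficient: $\uu(t)\in D(\tilde\A)$ only gives $u(t)\in\h^{2j}(\G)$, whereas $\Gamma_\circ\frac{\partial^{2j}u}{\partial x^{2j}}$ involves traces of derivatives of $u$ of order up to $3j-1$. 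You need the full smoothing effect of the analytic semigroup, $\uu(t)\in\bigcap_{k\in\N}D(\tilde\A^k)$ for $t>0$, so that $u(t,\cdot)$ is edgewise $C^\infty$; with that in place your computation equating the two components of $\partial_t\uu=-\tilde\A\uu$ and cancelling $(-1)^{j+1}$ is correct and coincides with the paper's.
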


The terms $\Gamma^\circ u$ and $\Gamma_\circ u$ involve differential terms of order up to $2j-1$ and $j-1$, respectively. The property in (iv) is therefore surprising:  it states that the natural order of the Wentzell-type boundary conditions for an operator of order $2j$ is not necessarily $2j$, as usually considered in the literature, but rather up to $3j-1$.

\begin{proof}
(i) Observe that the image of the form domain $\mathcal V$ under $j$ is compactly embedded in $L^2(\mathcal G)\oplus Y_d$, hence the operator $-\tilde\A$ has compact resolvent. Indeed, more is true: by~\cite{Gra68} the embedding of $j(\mathcal V)$ in $L^2(\mathcal G)\oplus Y_d$ is of Schatten class, hence the analytic semigroup generated by $-\tilde\A$ consists for all $t>0$ of trace class  operators~\cite[Rem.~3.4]{MugNit12}.

(ii) 
Let us then consider $L^\infty(\mathcal G)\oplus Y_d$ with the norm
\[\Vert\mathfrak u\Vert_\infty=\left\Vert \begin{pmatrix}
u\\\theta\end{pmatrix}\right\Vert_\infty:=\max\{\Vert u\Vert_{L^\infty(\G)},\Vert\theta\Vert_{Y_d}\}.
\]
Let ${\mathfrak u}:=\begin{psmallmatrix}u\\\theta\end{psmallmatrix}\in \mathcal V$.
The Gagliardo--Nirenberg inequality, cf.~\cite{Gag59}, yields
\[\Vert u\Vert_{L^\infty(\G)}\leq c_1\Vert u^{(j)}\Vert_{L^2(\G)}^\frac{1}{2j}\Vert u\Vert_{L^2(\G)}^\frac{2j-1}{2j}+c_2\Vert u\Vert_{L^2(\G)}.\]
On the other hand, because $\|u\|_{L^\infty(\G)}\le c\|u\|_{\widetilde{H}^j(\G)}$ we find that 
\begin{equation*}\label{bound}	\Vert\theta\Vert_{Y_d}=|\Gamma_\circ u|\leq \widetilde{c}_1\Vert u\Vert_{L^2(\G)}+\widetilde{c}_2\Vert u^{(j)}\Vert_{L^2(\G)}.\end{equation*}
Therefore, since $D$ and $S$ are dissipative
\begin{align*}\Vert\mathfrak{u}\Vert_\infty&\leq k_1\Vert u^{(j)}\Vert_{L^2(\G)}^\frac{1}{2j}\Vert u\Vert_{L^2(\G)}^\frac{2j-1}{2j}+k_2\Vert u\Vert_{L^2(\G)}\\&= k_1 \left(a(\mathfrak u)+(DP_{Y_d}\Gamma_\circ u,\Gamma_\circ u)_{Y_d}+(SP_{Y_s}\Gamma_\circ u,\Gamma_\circ u)_{Y_s}\right)^\frac{1}{4j}\Vert u\Vert_{L^2(\G)}^\frac{2j-1}{2j}+k_2\Vert u\Vert_{L^2(\G)}
\\&\leq k_1 \left(a(\mathfrak u)\right)^\frac{1}{4j}\Vert u\Vert_{L^2(\G)}^\frac{2j-1}{2j}+k_2\Vert u\Vert_{L^2(\G)}.
\end{align*}
Observe that $S,D$ dissipative also implies that the semigroup is contractive.  Also, recall that if an operator $A$ generates an analytic semigroup, then there exists a positive constant such that $||Ae^{-tA}||\leq \frac{c}{t}$ for all $t>0$. We follow an argument similar to that in \cite[Proposition 5.1]{GreMug20}: letting ${\mathfrak u}:=e^{-t\tilde\A}{\mathfrak f}$ and using analyticity and contractivity  on $L^2(\G)$ one obtains
\begin{equation}\label{eq:contra}
\begin{split}
\Vert e^{-t\tilde\A}\mathfrak f\Vert_{\infty}&\leq k_1\Vert\tilde\A e^{-t\tilde\A}\f\Vert_{L^2(\G)\oplus Y_d}^\frac{1}{4j}\Vert e^{-t\tilde\A}\f\Vert_{L^2(\G)\oplus Y_d}^\frac{1}{4j}\Vert e^{-t\tilde\A}\f\Vert_{L^2(\G)\oplus Y_d}^\frac{2j-1}{2j}+k_2\Vert e^{-t\tilde\A}\f\Vert_{L^2(\G)\oplus Y_d}\\
&\leq C(t^{-\frac{1}{4j}}+1)\Vert \f\Vert_{L^2(\G)\oplus Y_d}\\
&\leq \tilde{C}t^{-\frac{1}{4j}}e^{\omega t}\Vert \mathfrak f\Vert_{L^2(\G)\oplus Y_d}.
\end{split}
\end{equation}
This concludes the proof.

(iii)  The adjoint $\tilde\A^\ast$ of $\tilde\A$ satisfies the assumptions of this theorem, too, hence by (ii)
\[
\Vert e^{-t\tilde\A^\ast}\mathfrak f\Vert_{\infty}\leq Ct^{-\frac{1}{4j}}e^{\omega t}\Vert \mathfrak f\Vert_{L^2(\G)\oplus Y_d}
\]
and by duality
\begin{equation}\label{eq:contra-2}
\Vert e^{-t\tilde\A}\mathfrak f\Vert_{2}\leq C t^{-\frac{1}{4j}}e^{\omega t}\Vert \mathfrak f\Vert_{L^1(\G)\oplus Y_d}.
\end{equation}
By the semigroup law, combining~\eqref{eq:contra} and~\eqref{eq:contra-2} yields 
\begin{equation*}
\Vert e^{-t\tilde\A}\Vert_{1\to\infty}\leq C^2t^{-\frac{1}{2j}}e^{2\omega t}\qquad \hbox{for all }t>0,
\end{equation*}
and in particular $e^{-t\tilde\A}$ maps for all $t>0$ $L^1$ to $L^\infty$: the existence of an $L^\infty$-kernel then follows from the Kantorovich--Vulikh Theorem.

(iv) 
Because of the smoothing effect of the analytic semigroup generated by $-\tilde\A$, the solution $u(t,\cdot)$ is for all $t>0$ infinitely often differentiable (with respect to space), hence we can take the boundary values $\Gamma_\circ \frac{d^{2j}}{dx^{2j}}u$ of $\frac{d^{2j}}{dx^{2j}}u$. Because the time derivative and $\Gamma_\circ $ commute, plugging the parabolic equation satisfied in the interior of the edges into the dynamic boundary conditions we deduce that
\[(-1)^j \Gamma_\circ u^{2j}(t)=P_{Y_d}\Gamma^\circ u(t)+ DP_{Y_d}\Gamma_\circ u(t)\qquad t>0.\]
This concludes the proof.
\end{proof}

Observe that from the computations in \eqref{eq:contra} one also finds
\begin{equation*}\label{eq:ultra-non}
\|e^{-t\tilde\A}\f\|_{L^\infty(\G)\oplus Y_d}\leq c(t^{-\frac{1}{4j}}+1)^2\|\f\|_{L^1(\G)\oplus Y_d}\approx c\|\f\|_{L^1(\G)\oplus Y_d}\ \textrm{as}\ t\to\infty.
\end{equation*}

\begin{rem}\label{rem:general}
Parabolic equations driven by Laplacians on networks with dynamic vertex conditions have been studied in~\cite{MugRom07}. It has been observed in~\cite[Rem~3.6]{MugRom07} that modifying the coefficients of the normal derivative (the lower-left entry of the relevant operator matrix in that context) amounts to a relatively compact perturbation of an analytic semigroup generator: by a perturbation theorem due to Desch and Schappacher \cite[Thm.~3.7.25]{AreBatHie10}, the new operator generates an analytic semigroup, too. (Similar assertions were proved in~\cite{BanBelRei06,VazVit08}.)  The same idea carries over to our setting and yields that the operator matrix
\[-\tilde{\mathcal A}=(-1)^{j+1}\begin{pmatrix} p\frac{d^{2j}}{dx^{2j}} & 0\\ M  & T\end{pmatrix}\]
with domain
\begin{align*}D(\tilde{\mathcal A})=\bigg\{ \begin{pmatrix}u\\ {\bf \theta}\end{pmatrix}\in\h^{2j}(\G)\oplus Y_d: {\bf \theta}=P_{Y_d}\Gamma_\circ u\hbox{ and } P_{Y_s}\left(\Gamma^\circ u+SP_{Y_s}\Gamma_\circ u\right)=0\bigg\}
\end{align*}
for any bounded linear operators $M$ from $\h^{2j}(\G)$ to $Y_d$ and $T$ on $Y_d$, generates an analytic semigroup on $L^2(\mathcal G)\oplus Y_d$.
The proof of~\cite[Thm.~9]{BanBelRei06} can be modified to show that $-\tilde{\mathcal A}$ has a number of negative eigenvalues at least as large as the number of negative eigenvalues of $\Pi$, provided $M$ factorizes as $M=\Pi P_{Y_d}\Gamma^{\circ}$.
\end{rem}

 In~\cite{GreMug20} we have discussed the bi-Laplacian on $\G$ through extension theory of Hilbert spaces. We could pursue similar results here, but we avoid the details. Suffice it to say that if we impose continuity vertex conditions on the pre-minimal operator, i.e., we consider the operator matrix $\mathcal A$ in~\eqref{eq:aopmat} restricted to 
\[
\left\{\begin{pmatrix}u\\ \theta \end{pmatrix}\in C (\G)\oplus Y_d: u'\in \bigoplus_{\me\in\mE} C^\infty_c(0,\ell_\me)\hbox{ and } P_{Y_d}\Gamma_\circ u= \theta \right\},
\]
then its Friedrichs extension $\mathcal A_F$ is the realization of $\A$ whose domain contains functions $u$ that enjoy the boundary conditions $\partial_\nu^{h}u=0$ for all $1\le h\le j-1$, along with continuity of $u$ on the metric space $\G$ and a Kirchhoff-type condition on $\partial_\nu^{2j-1}u$ at each vertex. In particular, the null space of $\mathcal A_F$ is 1-dimensional: it is given by the space of all constants on $\G$. Also observe that $e^{-t\mathcal A_F}$ maps, for each $t>0$, $L^2(\G)\oplus Y_d$ into 
\[
D(\mathcal A_F)\hookrightarrow \left\{\begin{pmatrix}u\\ \theta \end{pmatrix}\in C (\G)\oplus Y_d:  u\in \bigoplus_{\me\in\mE} C^{2j-1}([0,\ell_\me])\hbox{ and }  P_{Y_d}\Gamma_\circ u=\theta \right\}.
\]
Combining these two facts with~\cite[Cor.~7.4 and Prop.~7.5]{GreMug20} we can immediately deduce remarkable properties of the semigroup generated by $-\mathcal A_F$, which we state without proof. It is interesting to compare them with the properties of the  $-\mathcal A_N$, defined as the realization of $-\mathcal A$ whose domain contains functions $u$ such that $\partial_\nu^{h}u$ is continuous of $\G$ for all $0\le h\le j-1$, while a Kirchhoff-type condition is satisfied by $\partial_\nu^{h}u$ for all $j\le h\le 2j-1$.
 \begin{proposition}
 Let $j\ge 2$.  Under the Assumptions~\ref{ass:main2}, for all subspaces $Y_d$ of $Y$ the semigroup on $L^2(\G)\oplus Y_d$ generated by $-\mathcal A_F$ is uniformly eventually sub-Markovian; furthermore, it eventually enjoys a uniform  strong Feller property. On the other hand, the semigroup on $L^2(\G)\oplus Y_d$ generated by $-\mathcal A_N$ is not even individually asymptotically positive.
 \end{proposition}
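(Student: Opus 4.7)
The plan is to apply \cite[Cor.~7.4 and Prop.~7.5]{GreMug20} directly to the present setting, leveraging the structural facts about $\mathcal A_F$ and $\mathcal A_N$ collected in the paragraph preceding the statement.

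For $\mathcal A_F$ I would verify the four ingredients underlying the eventual-positivity machinery of Daners--Gl\"uck type developed in~\cite{GreMug20}. First, the semigroup $(e^{-t\mathcal A_F})_{t\ge 0}$ is analytic on $L^2(\G)\oplus Y_d$ by the discussion preceding Proposition~\ref{prop:surpr}, since $\mathcal A_F$ is a particular instance of the realization $\tilde{\mathcal A}$ in Theorem~\ref{thm:main-general}. Second, it has trace-class resolvent by Proposition~\ref{prop:surpr}(i). Third, the kernel of $\mathcal A_F$ is one-dimensional and spanned by $\binom{\mathbf 1_\G}{P_{Y_d}\Gamma_\circ \mathbf 1_\G}$, which is a strictly positive element of $L^2(\G)\oplus Y_d$; since $\mathcal A_F$ is the Friedrichs extension of a symmetric positive operator, it is positive semi-definite, and hence the leading eigenvalue $0$ is simple with a strictly positive Perron eigenvector. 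Fourth, the smoothing statement recorded just before the proposition shows that $e^{-t\mathcal A_F}$ maps $L^2(\G)\oplus Y_d$ into a space of functions continuous on $\G$. These are exactly the hypotheses of \cite[Cor.~7.4]{GreMug20}, which yields uniform eventual positivity and, combined with the smoothing, the uniform strong Feller property. Uniform sub-Markovianity then follows at once: once positivity is available, the fact that $\mathbf 1_\G$ lies in the kernel of $\mathcal A_F$ ensures $L^\infty$-contractivity, so the semigroup is sub-Markovian in the eventual regime.

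For $\mathcal A_N$ the key observation is that its vertex conditions---continuity of $\partial_\nu^h u$ on $\G$ for $0\le h\le j-1$ together with Kirchhoff-type conditions on $\partial_\nu^h u$ for $j\le h\le 2j-1$---are precisely those under which $\mathcal A_N$ factors as the $j$-th power of the standard continuous Kirchhoff Laplacian on $\G$. For $j\ge 2$ this factorization forces $e^{-t\mathcal A_N}$ to inherit the sign-changing behaviour of the higher-order Laplacian eigenfunctions, producing spectral projections that are incompatible with positivity even asymptotically; this is precisely the scenario ruled out by the negative criterion of \cite[Prop.~7.5]{GreMug20}.

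The main obstacle is essentially bookkeeping: the criteria in~\cite{GreMug20} are stated for $j=2$ and $Y_d=\{0\}$, and one must confirm that the abstract framework carries over to polyharmonic operators of arbitrary order $j\ge 2$ acting on the product space $L^2(\G)\oplus Y_d$. Since $Y_d$ is finite-dimensional, positivity on $L^2(\G)\oplus Y_d$ decouples into componentwise positivity of the two slots, and the smoothing estimates of Proposition~\ref{prop:surpr}(ii)--(iii) provide precisely the regularity needed to apply the abstract arguments verbatim; the more delicate point is to identify the correct notion of ``strictly positive'' vector in the product space, for which the explicit description of $\ker\mathcal A_F$ above is decisive.
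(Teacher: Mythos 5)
Your treatment of $-\mathcal A_F$ is essentially the argument the paper intends (the paper states the proposition without proof, but the two preceding paragraphs record exactly the facts you list: the one-dimensional kernel spanned by the constants and the smoothing of $e^{-t\mathcal A_F}$ into continuous functions, to be combined with \cite[Cor.~7.4 and Prop.~7.5]{GreMug20}). Two small cautions there: the second component of your candidate Perron eigenvector is $P_{Y_d}\Gamma_\circ \mathbf 1$, and since $\Gamma_\circ$ carries the traces of $\partial_\nu^h u$ for $0\le h\le j-1$, this vector has vanishing entries in all components of order $h\ge 1$; for $\mathcal A_F$ this is harmless only because the Friedrichs boundary conditions force $Y$ (hence $Y_d$) to live in the zeroth-order trace components, and you should say so before calling the eigenvector ``strictly positive''. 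Likewise, eventual positivity plus $e^{-t\mathcal A_F}\mathbf 1=\mathbf 1$ gives sub-Markovianity only after one checks that the fixed vector really is the order unit of the product lattice.

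The genuine gap is in the $\mathcal A_N$ part. Your key claim --- that the conditions ``$\partial_\nu^h u$ continuous for $0\le h\le j-1$, Kirchhoff for $j\le h\le 2j-1$'' make $\mathcal A_N$ the $j$-th power of the Kirchhoff Laplacian --- is false, and the paper itself says so: Example~\ref{exa:notsquare}(1) notes that among the standard realizations only the Dekoninck--Nicaise conditions (continuity of $u$ and $u''$, Kirchhoff on $u'$ and $u'''$, i.e.\ \emph{alternating} continuity and Kirchhoff conditions) yield a square of a Laplacian, whereas $\mathcal A_N$ imposes continuity on \emph{all} of $\partial_\nu^0 u,\dots,\partial_\nu^{j-1}u$. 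Worse, even if the factorization held it would prove the opposite of what you want: the $j$-th power of the Kirchhoff Laplacian is a positive self-adjoint operator whose semigroup converges to the (positive) projection onto the constants, so every orbit starting in the positive cone approaches that cone and the semigroup \emph{is} individually asymptotically positive. The actual mechanism behind the failure for $\mathcal A_N$ (this is the content of \cite[Prop.~7.5]{GreMug20}) is that its kernel --- the edgewise polynomials of degree $\le j-1$ that are $C^{j-1}$ across the vertices --- has dimension larger than one as soon as $j\ge 2$, so the asymptotic projection $P_{\ker\mathcal A_N}$ is not a positive operator; one exhibits a nonnegative initial datum whose projection onto this kernel changes sign, and since $e^{-t\mathcal A_N}f\to P_{\ker\mathcal A_N}f$, the distance of the orbit from the positive cone does not tend to $0$. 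This step is missing from your argument and cannot be repaired along the route you chose.
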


By \textit{eventual sub-Markovian} (resp., eventually irreducible) we mean that there exists some $t_0>0$ such that $0\le e^{-t\A_F}\mathfrak{f}\le {\bf 1}$ (resp., $0\ll e^{-t\A_F}\mathfrak{f}$) for all $t\ge t_0$ and all $\mathfrak{f}$ such that $0\le \mathfrak{f}\le {\bf 1}$ (resp, $0\le \mathfrak{f}$, $0\not\equiv \mathfrak{f}$), where ${\bf 1}$ is the constant 1 function. Also, a bounded semigroup is called \textit{individually asymptotically positive} if the distance between each orbit and the Hilbert lattice's positive cone tends to 0 as $t\to\infty$.

Similarly, we say that a semigroup \textit{eventually enjoys a  strong Feller property} if for all $t\ge t_0$ it is sub-Markovian and maps bounded measurable functions to bounded continuous functions.

While  $-\mathcal A_F$ generates on $L^2(\mathcal G)$ a semigroup that leaves $C(\mathcal G)$ invariant and is bounded in $\infty$-norm, it is currently unknown whether its part in $C(\mathcal G)$ is the generator of a strongly continuous semigroup.

\bibliographystyle{alpha}
\newcommand{\etalchar}[1]{$^{#1}$}

\end{document}